\documentclass[11pt]{amsart}
\usepackage[margin=1in]{geometry}                
\geometry{letterpaper}                   
\usepackage[parfill]{parskip}    
\usepackage{graphicx}
\usepackage{amssymb}
\usepackage{epstopdf}
\usepackage{latexsym,enumitem,amsmath,amsthm,amsfonts,bbm,hyperref,bm,stmaryrd,tikz-cd}
\usepackage{mathabx}
\usepackage{url}
\DeclareGraphicsRule{.tif}{png}{.png}{`convert #1 `dirname #1`/`basename #1 .tif`.png}

\usepackage{comment}

\newtheorem{theorem}{Theorem}
\newtheorem{corollary}[theorem]{Corollary}
\newtheorem{lemma}[theorem]{Lemma}
\newtheorem{proposition}[theorem]{Proposition}
\newtheorem*{lem:kirkgoebel}{Lemma \ref{lem:kirkgoebel}}

\theoremstyle{definition}
\newtheorem{definition}[theorem]{Definition}
\newtheorem{remark}[theorem]{Remark}

\newtheorem{example}[theorem]{Example}

\setcounter{MaxMatrixCols}{20}

\makeatletter
\newcommand{\dotminus}{\mathbin{\text{\@dotminus}}}

\newcommand{\@dotminus}{%
  \ooalign{\hidewidth\raise1ex\hbox{.}\hidewidth\cr$\m@th-$\cr}%
}
\makeatother

\newcommand{\Vop}{\mathcal{V}^{\text{op}}}

\newcommand{\Rop}{\mathbb{R}^{\text{op}}}

\newcommand{\mcv}{\mathcal{V}}

\newcommand{\PSh}{\operatorname{PSh}}
\newcommand{\sPSh}{\operatorname{sPSh}}

\newcommand{\Set}{\mathbf{Set}}

\newcommand{\Met}{\mathbf{Met}}

\newcommand{\sSet}{\mathbf{sSet}}

\newcommand{\yon}{\mathbf{y}}

\newcommand{\Dop}{\Delta^{\text{op}}}
\newcommand{\vgph}{\mathcal{V}\text{-}\mathbf{Gph}}
\newcommand{\rgph}{\mathbb{R}\text{-}\mathbf{Gph}}
\newcommand{\vocat}{(\mathcal{V},\otimes)\text{-}\mathbf{Cat}}

\newcommand{\rpcat}{(\mathbb{R},+)\text{-}\mathbf{Cat}}
\newcommand{\vocato}{(\mathcal{V},\otimes_1)\text{-}\mathbf{Cat}}
\newcommand{\vocatt}{(\mathcal{V},\otimes_2)\text{-}\mathbf{Cat}}
\newcommand{\Fr}{\operatorname{Free}}
\newcommand{\Gn}{\Gamma^n(r_1, \dots, r_n)}
\newcommand{\Don}{\Delta^n_{\otimes}(r_1, \dots, r_n)}
\newcommand{\Dons}{\Delta^n_{\otimes}(s_1, \dots, s_n)}
\newcommand{\Dont}{\Delta^n_{\otimes}(t_1, \dots, t_n)}
\newcommand{\Doon}{\Delta^n_{\otimes_1}(r_1, \dots, r_n)}
\newcommand{\Dotn}{\Delta^n_{\otimes_2}(r_1, \dots, r_n)}
\newcommand{\coeq}{\text{coeq} \,}
\newcommand{\eq}{\text{eq} \, }
\newcommand{\Lan}{\operatorname{Lan}}
\newcommand{\sab}{\mathbf{sAb}}
\newcommand{\Ch}{\mathbf{Ch}}
\newcommand{\sabv}{\mathbf{sAb}^{\Vop}}
\newcommand{\Ab}{\mathbf{Ab}}
\newcommand{\scpx}{\mathbf{SCpx}}
\newcommand{\sing}{\operatorname{Sing}}
\newcommand{\vr}{\operatorname{VR}}
\newcommand{\oN}{\overline{N}}
\newcommand{\loc}{\operatorname{Loc}}
\newcommand{\mch}{\mathcal{H}}
\newcommand{\mbc}{\mathbf{c}}
\newcommand{\mbC}{\mathbf{C}}

\makeatletter
\newcommand{\neutralize}[1]{\expandafter\let\csname c@#1\endcsname\count@}
\makeatother

\newenvironment{thmbis}[1]
  {%
   \neutralize{theorem}\phantomsection
   \begin{theorem}}
  {\end{theorem}}


\title{Quantales, persistence, and magnitude homology}
\author{Simon Cho}

\begin{document}
\maketitle

\begin{abstract}

We construct a nerve functor parametrized by a choice of quantale, exhibiting both the Vietoris-Rips complex and the magnitude nerve as instances of this nerve for different choices of monoidal structure on $\mathbb{R}$. Furthermore, the difference between how persistent homology processes the Vietoris-Rips complex and how magnitude homology processes the magnitude nerve is cast as a choice of whether or not to ``localize'' the corresponding nerves along $\mathbb{R}$ in a precise sense. Lastly, we mention some application-oriented observations naturally suggested by the perspective mentioned above.

\end{abstract}

\section{Introduction}

In recent years the idea of studying a metric space by turning it into the data of an $\mathbb{R}$-indexed sequence of simplicial complexes and then applying homology pointwise along $\mathbb{R}$ has manifested, completely independently, in two different contexts: persistent homology and magnitude homology. The former, described in greater detail in \cite{ghrist}, is an effective tool in situations involving potentially noisy data sets where one seeks ``holes'' in the data, an instructive example of which is a point cloud (in e.g. the Euclidean plane) arranged in the rough outline of a circle. A human observer would immediately be able to identify such a space as ``roughly a circle'', and persistent homology is one answer to the need for a concise mathematical feature of the space which captures this observation. On the other hand, magnitude homology is an algebro-topological generalization of the notion of magnitude \cite{magnothom}, and is used e.g. to test for certain kinds of convexity in a metric space \cite{lsmaghom}.

While both persistent and magnitude homology follow the same general philosophy of applying homology in a metric analysis setting, the features they capture and thus the specific applications they are utilized towards are quite different. This is due to important differences in the way the machinery of persistent/magnitude homology processes each metric space into a sequence of simplicial complexes, which we address in this paper. To our knowledge, the first (and, until now, only) paper to explicitly study the relationship between persistent and magnitude homology is \cite{otter}, which casts the difference as a kind of ``blurring''. We offer an alternative perspective through which we exhibit persistent and magnitude homology as instances of the same general framework, where one gets persistent or magnitude homology depending on different choices of (1) monoidal structure on $\mathbb{R}$ (which is neglected in \cite{otter}) and (2) localization of the sequence of simplicial complexes (which is the ``blurring'' mentioned in \cite{otter}).

First in Section \ref{sec:setup} we set up the basics of the framework, by recalling the notion of a quantale and developing the perspective that metric spaces are just graphs enriched over quantales, after Lawvere's famous observation in \cite{lawveremetric}. In Section \ref{sec:simplicial} we examine the Vietoris-Rips complex and the ``enriched nerve'' of \cite{otter} from this perspective, and describe precisely in what sense they are simply occurrences of the same construction but for different choices of a parameter. Specifically, we take a (very simplified) abstract homotopy theoretic approach to studying quantale-enriched graphs, and show that there is a uniform way in which each choice of monoidal structure gives rise to a nerve functor; the Vietoris-Rips complex and the enriched nerve are then the nerve functors corresponding to distinct monoidal structures on $\mathbb{R}$, each of which may be specified by a single (extended) real number $p \in [1, \infty]$.

The second essential difference between persistent and magnitude homology is that whereas persistent homology retains the data of smaller-scale simplices at every scale $r \in \mathbb{R}$ (thus the term ``persistent''), magnitude homology deliberately forgets this data; the only simplices which survive at scale $r$ are those of size exactly $r$. This forgetting is referred to as ``blurring'' in \cite{otter}, while we prefer to think of it as a sort of localization along $\mathbb{R}$. This is developed in Section \ref{sec:persmaghom} where we ultimately show in Theorem \ref{thm:persvsmagprime} that persistent homology and magnitude homology correspond to two different ways to decompose a natural transformation from ``global $\ell^1$ homology'' to ``local $\ell^{\infty}$ homology''.

As an application of our ``quantalic'' perspective, in Section \ref{sec:apps} we show a few results that hold in settings where one may apply persistent/magnitude homology. While these are not direct corollaries of the statements proven in the preceding sections, they are answers to questions naturally and strongly suggested by the perspective developed in this paper. More specifically, exploring the effects of different choices of quantale leads us to the following results, stated more precisely in Section \ref{sec:apps}:
\begin{enumerate}[label=$\circ$, ref=$\circ$]

\item Persistent homology is an indicator of the failure of a metric space to be an ultrametric space.

\item ``Approximate magnitude homology'' is an indicator of the existence of ``approximately collinear'' points.

\item In the context of automata theory, given an automaton with inputs for which we have a good notion of cost (of enacting each input), magnitude homology is generated in degree $1$ by pairs of states for which there is an ``indecomposable'' transition which is cheaper than any composite transition between them.

\end{enumerate}
These are just a sampling of the results one is naturally led towards from the viewpoint presented in this paper, and the author would be interested in - and would encourage those in the persistent/magnitude homology communities to explore - further and deeper applications of this perspective.

The author is grateful to John Baez, Andreas Blass, Alexander Campbell, Justin Curry, Brendan Fong, Robert Ghrist, Simon Henry, Dirk Hoffmann, Chris Kapulkin, Emily Riehl, David Spivak, and Henry Towsner for helpful conversations.

\section{Setup}
\label{sec:setup}

This section recalls well-established parts of category theory that we will use throughout the rest of this paper; standard references are \cite{rosenthal} for quantales, \cite{kelly} for enriched categories, and \cite{lawveremetric} for the specific sense of enriched categories that we work with.

In a few places (Remark \ref{rmk:ordering} and Example \ref{ex:qs}) we will establish notational/naming conventions that differ from some more standard ones, so as to be more obviously suggestive of the ideas in this paper.

\begin{definition}
\label{def:frame}

Let $\mcv$ be a category. $\mcv$ is a \emph{frame} when it satisfies the following:

	\begin{enumerate}

	\item $\mcv$ is a complete lattice, i.e. $\mcv$ is a poset with all (small) joins and meets.

	\item Joins commute with finite meets: for all $r, s_i \in \mcv$, we have
	\begin{equation*}
	r \wedge \left ( \bigvee\limits_i s_i \right ) = \bigvee\limits_i \left ( r \wedge s_i \right )
	\end{equation*}

	\end{enumerate}

\end{definition}

We will think of frames as posets of truth values; we will elaborate on this later. For now, we note that frames are a particular case of the following more general notion of ``poset of truth values''.

\begin{definition}
\label{def:quantale}

Let $\mcv$ be a complete lattice.

\begin{enumerate}

\item A \emph{semicartesian monoidal structure} on $\mcv$ is the data of a monoidal structure $\otimes$ on $\mcv$ for which the unit is also the terminal object of $\mcv$. We write $(\mcv, \otimes)$ to refer to $\mcv$ with a specified such structure.

\item $(\mcv, \otimes)$ is an \emph{affine quantale} when joins in $\mcv$ commute with $\otimes$: for all $r, s_i \in \mcv$, we have
\begin{equation*}
r \otimes \left ( \bigvee\limits_i s_i \right ) = \bigvee\limits_i \left ( r \otimes s_i \right )
\end{equation*}

\end{enumerate}

\end{definition}

For tidiness we will henceforth refer to affine quantales as simply \emph{quantales}, and to their monoidal structures as \emph{quantale structures}.

\begin{remark}
\label{rmk:ordering}

For peculiar technical reasons we will frequently have occasion to speak of the opposite ordering on a given quantale. Therefore we adopt the following convention, which we will follow throughout the rest of this paper.

Given a quantale $(\mcv, \otimes)$ and any $r, s \in \mcv$, we write $r \leq s$ iff $s \rightarrow r$ in $\mcv$. That is, $\leq$ is the partial order on $\Vop$. Moreover, given any $r_i, r, s \in \mcv$, we write
\begin{align*}
\sup\limits_i r_i &= \bigwedge\limits_i r_i & \max (r, s) &= r \wedge s \\
\inf\limits_i r_i &= \bigvee\limits_i r_i & \min (r, s) &= r \vee s
\end{align*}
where the $\bigwedge, \bigvee$ above are taken in $\mcv$. That is, ``$\sup$'' and ``$\inf$'' of objects of $\mcv$ refer respectively to $\bigwedge$ and $\bigvee$ in $\mcv$, or equivalently, respectively to joins and meets in $\Vop$.

Similarly, we will refer to the \emph{terminal} object of $\mcv$ (thus the initial object of $\Vop$) as $0$ and the \emph{initial} object of $\mcv$ (thus the terminal object of $\Vop$) as $\infty$, for reasons that will become clear shortly.

\end{remark}

\begin{example}
\label{ex:qs}
\item
\begin{enumerate}

\item Every frame has a default quantale structure provided by the meet ($\max$) operation.

\item Let $\mathbb{R}$ denote the poset category whose objects are the \emph{extended nonnegative} real numbers $0 \leq r \leq \infty$, with the reverse of the usual ordering on the real numbers, i.e. $s \rightarrow r$ in $\mathbb{R}$ iff $r \leq s$ as real numbers. This agrees with the perspective that the objects of $\mathbb{R}$ are truth values for the equality (i.e. distance) predicate, so that a distance of $0$ means that equality takes the top truth value.

This is the motivation for the convention just adopted in Remark \ref{rmk:ordering}, whereby the usual ordering of the real numbers agrees with the ordering on $\Rop$ (so there is no confusion in referring to either/both simultaneously by $\leq$), $0$ is initial in $\Rop$, and $\infty$ is terminal in $\Rop$.

Certainly $\mathbb{R}$ is a frame. For any real $1 \leq p \leq \infty$, we also have that $(\mathbb{R} , +_p )$ is a quantale, where for nonnegative reals $r, s \in \mathbb{R}$ we define
\begin{equation*}
r +_p s = (r^p + s^p)^{\frac{1}{p}}
\end{equation*}
with the operations on the right hand side being the usual addition and exponentiation of real numbers, and $r +_{\infty} s = \max (r, s)$. Clearly $+_1$ is just the usual addition, so we denote it as $+$.

Note that for $p < q$ we have that $r +_p s \geq r +_q s$ for all $r, s \in \mathbb{R}$, with equality holding iff at least one of $r,s$ is $0$. As we shall see in more detail, this induces an ordering on the $p$-parametrized (for $p \in [1, \infty]$) variants of ``metric space''. In particular, satisfaction of the triangle inequality for $+_p$ for any choice $p \in [1,\infty]$ implies satisfaction of the usual triangle inequality (i.e. for $+$). \label{quantales}


\item $\mathbbm{2} = \{ \, (\top = ) \, 0 \, \longleftarrow \, \infty \, (= \bot) \, \}$ is the two-element Boolean algebra, which is clearly a frame. This naming/ordering of the objects of $\mathbbm{2}$ makes sense from the perspective of the above discussion of the ordering on $\mathbb{R}$.

We have the evident frame inclusion $\mathbbm{2} \hookrightarrow \mathbb{R}$ given by $0 \mapsto 0$ and $\infty \mapsto \infty$, which is also a quantale inclusion $(\mathbbm{2}, \max) \hookrightarrow (\mathbb{R}, +_p)$ for any $p \in [1, \infty]$.

\end{enumerate}

\end{example}

We may now speak of ``graphs enriched over frames'':

\begin{definition}
\label{def:vgph}

Let $\mcv$ be a frame. A \emph{$\mcv$-graph} $X$ is specified by the following data:

\begin{enumerate}

\item A set, which we also write as $X$, of \emph{objects} or \emph{vertices}

\item For each ordered pair of objects $a, b \in X$, a specified $X(a,b) \in \mcv$ such that $X(a, a)=0$ for all $a \in X$.

\end{enumerate}

\end{definition}

We think of a $\mcv$-graph as a sort of generalized metric space with distances valued in $\mcv$, where ``generalized'' refers to the fact that the $\mcv$-labeling of edges in $\mcv$-graphs need not be symmetric nor satisfy any form of triangle inequality. Indeed, if $\mcv = \mathbb{R}$, these are known in e.g. \cite{cech} as extended pseudo-quasi-semi-metric spaces; for reasons of economy we simply call them $\mathbb{R}$-graphs for now.

If in addition we have a specified quantale structure on $\mcv$ making it into a quantale $(\mcv, \otimes)$, then we may ask whether or not a given $\mcv$-graph $X$ is actually a $(\mcv, \otimes)$-category, whose definition we now recall:

\begin{definition}
\label{def:vocat}

Let $(\mcv, \otimes)$ be a quantale, and let $X$ be a $\mcv$-graph. Then $X$ is a \emph{$(\mcv, \otimes)$-category} when, for all ordered triples $a, b, c \in X$, we have that
\begin{equation*}
X(a,c) \leq X(a,b) \otimes X(b,c).
\end{equation*}

\end{definition}

Thus a $(\mcv,\otimes)$-category is just a $\mcv$-graph that satisfies the triangle inequality in the sense provided by $\otimes$. If we take $(\mcv, \otimes)$ to be $(\mathbb{R}, +)$, then a $(\mathbb{R},+)$-category is known in e.g. \cite{cech} as a quasi-pseudo-metric space, or increasingly commonly as Lawvere metric spaces (after \cite{lawveremetric}). Again for reasons of economy we will shortly establish different terminology.

For now, we note that given a frame $\mcv$ (resp. quantale $(\mcv,\otimes)$) the $\mcv$-graphs (resp. $(\mcv,\otimes)$-categories) naturally assemble into a category where the morphisms are maps on vertices which ``weakly decrease distance'':

\begin{definition}
\label{def:vcats}
\item

\begin{enumerate}

\item Let $\mcv$ be a frame. The \emph{category of $\mcv$-graphs}, which we write as \emph{$\vgph$}, has as its objects the $\mcv$-graphs.

A morphism $f: X \rightarrow Y$ from a $\mcv$-graph $X$ to a $\mcv$-graph $Y$ is given by a set function $f: X \rightarrow Y$ from the set of vertices of $X$ to the set of vertices of $Y$ such that for all $a, b \in X$ we have
\begin{equation*}
X(a, b) \geq Y(fa, fb).
\end{equation*}

\item Let $(\mcv, \otimes)$ be a quantale. The \emph{category of $(\mcv,\otimes)$-categories}, which we write as \emph{$\vocat$}, has as its objects the $(\mcv,\otimes)$-categories.

A morphism $f: X \rightarrow Y$ of $(\mcv,\otimes)$-categories is given by a morphism of the underlying $\mcv$-graphs.

\end{enumerate}

\end{definition}

As Example \ref{ex:qs} might suggest, our main examples arise when $\mcv = \mathbb{R}$, which we will refer to as metric spaces. As there are many different conventions and preferences in the literature as to precisely which flavor of metric spaces a given author is referring to, we state the following to prevent confusion in the rest of the paper.

\begin{definition}
\label{def:mets}

By the term \emph{metric space} we refer to an $\mathbb{R}$-graph.

\begin{enumerate}

\item Recall that for each (extended) real number $1 \leq p \leq \infty$ there is a quantale $(\mathbb{R}, +_p)$. By the term \emph{$\ell^p$ metric space} we refer to an $(\mathbb{R}, +_p)$-category.

	\begin{enumerate}[label=$\circ$,ref=$\circ$]

	\item When $p = \infty$ so that $+_p = \max$, we also refer to $\ell^{\infty}$ metric spaces as \emph{ultrametric spaces}.

	\end{enumerate}

\item We use the prefix \emph{symmetric} for a metric space $X$ to indicate that for all $a,b \in X$, $X(a,b) = X(b,a)$.

\item We use the prefix \emph{strict} for a (possibly symmetric) metric space $X$ to indicate that it has the property that for all $a,b \in X$, $X(a,b) = 0$ implies $a = b$.

\end{enumerate}

\end{definition}

Since we will have occasion to make the distinction, let us use the term \emph{honest metric space} to refer to a metric space in the classical sense, i.e. a space whose metric is ``positive definite'', symmetric, and satisfies the triangle inequality. In terms of our new terminology, honest metric spaces are exactly the strict symmetric $\ell^1$ metric spaces.

There is an adjunction (as noted e.g. in \cite{lawveremetric})
\begin{tikzcd}
\vgph
\arrow[shift left=.5em]{r}[name=F]{\Fr}
		& \vocat
		\arrow[shift left=.5em]{l}[name=U]{U}
		\arrow[from=F, to=U, phantom]{}[rotate=90]{\vdash}
\end{tikzcd}
where $U$ is the evident forgetful functor and $\Fr$ is the functor which takes a $\mcv$-graph $X$ and returns its \emph{free $(\mcv,\otimes)$-category} $\Fr X$, which is constructed as follows:
\begin{enumerate}

\item The vertices of $\Fr X$ are the vertices of $X$.

\item For each ordered pair of vertices $a,b \in X$,
\begin{equation*}
\Fr X(a,b) = \inf\limits_{\substack{n \, \in \, \mathbb{N} \\ x_0, \dots, x_n}} \left ( \bigotimes\limits_{1 \leq i \leq n} X(x_{i-1}, x_i) \right )
\end{equation*}
where the infimum is taken over all possible finite sequences $x_0, \dots, x_n$ of vertices satisfying $x_0 = a$ and $x_n = b$.

\end{enumerate}
One can easily verify that this assignment on objects of $\vgph$ extends uniquely to a functor $\Fr: \vgph \rightarrow \vocat$. Moreover, we have that $U$ embeds $\vocat$ fully and faithfully into $\vgph$ so that $\vocat$ is a reflective subcategory with $\Fr$ being the reflection.

The following shows that $\vgph$ is bicomplete. Note that there is an obvious forgetful functor from $\vgph$ to the category $\Set$ of sets which sends each $\mcv$-graph to its set of vertices; therefore when we ``take the (co)limit in $\Set$'' of a diagram in $\vgph$, we mean that we apply the forgetful functor to the diagram first, and then take the (co)limit in $\Set$.

\begin{lemma}
\label{lem:vgphlims}

Let $\mcv$ be a frame.

\begin{enumerate}

\item Given a set $I$ and $X_i \in \vgph$ for each $i \in I$, \label{indexed}

	\begin{enumerate}

	\item The coproduct $\coprod\limits_i X_i$ exists in $\vgph$ and has the following explicit description:

		\begin{enumerate}

		\item The set of vertices of $\coprod\limits_i X_i$ is the coproduct of the $X_i$ in $\Set$;

		\item For $a, b \in \coprod\limits_i X_i$,
		\begin{equation*}
		\coprod\limits_i X_i (a,b) = 
		\left \{ \begin{array}{cc}
		X_k (a,b) & \text{if } a,b \in X_{k} \text{ for some } k \in I \\[0.25em]
		\infty & \text{otherwise}
		\end{array} \right .
		\end{equation*}

		\end{enumerate}

	\item The product $\prod\limits_i X_i$ exists in $\vgph$ and has the following explicit description:

		\begin{enumerate}

		\item The set of vertices of $\prod\limits_i X_i$ is the product of the $X_i$ in $\Set$, so that for any $a \in \prod\limits_i X_i$, we have $a = (a_i \mid a_i \in X_i)$;

		\item With notation as above, given any $a, b \in \prod\limits_i X_i$,
		\begin{equation*}
		\prod\limits_i X_i (a,b) = \sup\limits_i X_i (a_i, b_i)
		\end{equation*}

		\end{enumerate}
	\label{prods}
	\end{enumerate}

\item Given a pair of parallel morphisms
\begin{tikzcd}
X
\arrow[shift left]{r}{f}
\arrow[shift right]{r}[swap]{g}
	& Y
\end{tikzcd}
in $\vgph$, \label{span}

	\begin{enumerate}

	\item Their coequalizer $\coeq (f,g)$ exists in $\vgph$ and has the following explicit description:

		\begin{enumerate}

		\item The set of vertices of $\coeq (f,g)$ is the coequalizer of
		\begin{tikzcd}
		X
		\arrow[shift left]{r}{f}
		\arrow[shift right]{r}[swap]{g}
			& Y
		\end{tikzcd}
		in $\Set$; denote this set by $Z$. Denote by $\pi: Y \rightarrow Z$ the canonical quotient map in $\Set$.

		\item For any $a,b \in \coeq(f,g)$,
		\begin{equation*}
		\coeq(f,g) (a,b) = \inf\limits_{\substack{c,d \, \in \, Y \\ \pi(c) = a \\ \pi (d) = b}} Y(c,d)
		\end{equation*}

		\end{enumerate}

	\item Their equalizer $\eq (f,g)$ exists in $\vgph$ and has the following explicit description:

		\begin{enumerate}

		\item The set of vertices of $\eq (f,g)$ is the equalizer of
		\begin{tikzcd}
		X
		\arrow[shift left]{r}{f}
		\arrow[shift right]{r}[swap]{g}
			& Y
		\end{tikzcd}
		in $\Set$; denote this set by $A$. Denote by $i: A \rightarrow X$ the canonical inclusion in $\Set$.

		\item For any $a, b \in \eq (f,g)$,
		\begin{equation*}
		\eq (f,g) (a,b) = X(ia, ib)
		\end{equation*}

		\end{enumerate}
	\label{eqs}
	\end{enumerate}

\end{enumerate}

\end{lemma}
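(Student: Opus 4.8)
The guiding observation is that the forgetful functor $U \colon \vgph \to \Set$ to the set of vertices is \emph{topological}: a $\mcv$-graph structure on a set $S$ is simply a function $S \times S \to \mcv$ vanishing on the diagonal, a morphism of $\vgph$ is a function of underlying sets satisfying a pointwise inequality in the order $\leq$ of Remark \ref{rmk:ordering}, and for any set $S$ and family $(f_i \colon S \to U X_i)_i$ resp. $(g_i \colon U X_i \to S)_i$ of set functions there is a \emph{smallest} resp. \emph{largest} $\mcv$-graph structure on $S$ — computed entrywise by a $\sup$ resp. $\inf$ (in the sense of Remark \ref{rmk:ordering}, i.e. a meet resp. join in the complete lattice $\mcv$) — making all the $f_i$ resp. $g_i$ into $\vgph$-morphisms. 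Since $\Set$ is bicomplete, this already forces $\vgph$ to be bicomplete, with $U$ creating (co)limits in the sense that the vertex set of a (co)limit is the corresponding (co)limit in $\Set$, carrying the initial (for limits) resp. final (for colimits) structure relative to the (co)cone of structure maps. The four displayed formulas are exactly these initial/final structures, so the remaining task is just to verify this case by case; I would present the verification directly rather than invoking the general machinery.

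In each case the argument has three routine steps. First, one checks that the displayed formula $d$ really defines a $\mcv$-graph, i.e. $d(a,a)=0$: for $\coprod_i X_i$ and $\prod_i X_i$ this is immediate from $X_i(a_i,a_i)=0$ (for the product, $\sup$ of a family of copies of $0$ is $0$); for $\eq(f,g)$ it reads $X(ia,ia)=0$; for $\coeq(f,g)$ one picks any $c$ in the nonempty fiber $\pi^{-1}(a)$, notes the relevant infimum is $\leq Y(c,c)=0$, and uses that $0$ is $\leq$-least. Second, one checks the structure maps are $\vgph$-morphisms: for the coproduct inclusions, the quotient $Y \to \coeq(f,g)$, and the inclusion $\eq(f,g)\hookrightarrow X$ this holds by construction (equality, or "$\geq$ one term of an infimum"), and for the product projections it is the defining property of the meet in $\mcv$. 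Third — the only step that is not entirely mechanical — one checks the universal property: given a competing (co)cone in $\vgph$, the unique factorization of underlying sets (which exists in $\Set$) is automatically a $\vgph$-morphism. For $\prod_i X_i$ and $\eq(f,g)$ this is the universal property of meets in $\mcv$ (a map into the limit weakly decreases distance once each of its components does); for $\coprod_i X_i$ and $\coeq(f,g)$ it is the universal property of joins (a map out of the colimit weakly decreases distance once its restriction along each structure map does) — in the coproduct case using that $\infty$ is the $\leq$-greatest element of $\mcv$, so the cross-component inequalities are automatic, and in the coequalizer case using that a $\leq$-lower bound of $\{\, Y(c,d) : \pi(c)=a,\ \pi(d)=b \,\}$ lies $\leq$ its infimum.

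There is no real obstacle here; the one thing demanding care is purely bookkeeping with the reversed-order conventions of Remark \ref{rmk:ordering} — namely that "$\sup$", "$0$", and meets in $\Vop$ correspond to $\bigwedge$, the terminal object, and joins in $\mcv$ (and dually) — so that one does not confuse the "initial" (smallest) structure used for products and equalizers with the "final" (largest) structure used for coproducts and coequalizers. Note also that only completeness of the lattice $\mcv$ is used in this lemma; the frame axiom plays no role.
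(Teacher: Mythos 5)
Your proof is correct and, once you descend from the topological-functor framing to the case-by-case verification of universal properties, takes essentially the same approach as the paper — which in fact leaves those verifications entirely to the reader as "straightforward." The conceptual gloss (initial/final structures relative to $U\colon \vgph \to \Set$) and your closing remark that only the completeness of the lattice $\mcv$, not the frame distributivity axiom, is used are both accurate and are worthwhile additions, though not strictly needed for the result.
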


\begin{proof}

With the above descriptions in hand, verification of the requisite universal properties is entirely straightforward.

\end{proof}

We note that if there is a quantale structure $\otimes$ on $\mcv$, then the constructions provided by Lemma \ref{lem:vgphlims}(\ref{prods}),(\ref{eqs}) work perfectly well, without modification, to provide small limits in $\vocat$. That is, $\vocat \subseteq \vgph$ is closed under taking limits in $\vgph$, and since this inclusion is a full embedding, taking limits in $\vgph$ agrees with taking limits in $\vocat$.

Moreover, since $\Fr: \vgph \rightarrow \vocat$ is a left adjoint, in fact a reflection, we also have that $\vocat$ has small colimits, and that these colimits are computed by taking the colimit in $\vgph$ and then applying $\Fr$.

We now consider the case of two distinct quantales $(\mcv, \otimes_1)$ and $(\mcv, \otimes_2)$ sharing the same underlying frame $\mcv$; Example \ref{ex:qs} (\ref{quantales}) provides many such examples all sharing the same underlying frame $\mathbb{R}$, the main case of interest in this paper. If $\otimes_1 \geq \otimes_2$ in the sense that $r \otimes_1 s \geq r \otimes_2 s$ for all $r, s \in \mcv$, then every $(\mcv, \otimes_2)$-category is automatically a $(\mcv, \otimes_1)$-category, so that we get a full embedding, indeed a forgetful functor
\begin{equation*}
U^1_2 : \vocatt \rightarrow \vocato
\end{equation*}
with left adjoint
\begin{equation*}
\Fr^1_2: \vocato \rightarrow \vocatt
\end{equation*}
which takes a $(\mcv, \otimes_1)$-category $X$ and modifies the distances according to the formula
\begin{equation*}
\left ( \Fr^1_2 X \right )(a,b) = \inf\limits_{\substack{n \, \in \, \mathbb{N} \\ x_0, \dots, x_n}} \left ( \operatorname*{\bigotimes\nolimits_2}\limits_{1 \leq i \leq n} X(x_{i-1}, x_i) \right )
\end{equation*}
whose easy verification of functoriality we leave to the reader.

We summarize the preceding discussion as follows:

\begin{proposition}
\label{prop:adjoints}

Let $(\mcv, \otimes_1)$ and $(\mcv, \otimes_2)$ be two quantales with the same underlying frame $\mcv$, with $\otimes_1 \geq \otimes_2$. Then the preceding constructions yield the following commutative diagram of adjunctions:
\begin{equation*}
\begin{tikzcd}[row sep = 8em, column sep = 5em]
	& \vgph
	\arrow[shift right = .5em]{dl}[swap, name=f1]{\Fr_1}
	\arrow[shift left = .5em]{dr}[name=f2]{\Fr_2}
		& \\
\vocato
\arrow[shift right = .5em]{ur}[swap, name=u1]{U_1}
\arrow[from=f1, to=u1, phantom]{}[rotate=135, pos=0.575]{\vdash}
\arrow[shift left = .5em]{rr}[name=f12]{\Fr^1_2}
	&
		& \vocatt
		\arrow[shift left = .5em]{ul}[name=u2]{U_2}
		\arrow[from=f2, to=u2, phantom]{}[rotate=45, pos=0.575]{\vdash}
		\arrow[shift left = .5em]{ll}[name=u21]{U^1_2}
		\arrow[from=f12, to=u21, phantom]{}[rotate=90]{\vdash}
\end{tikzcd}
\end{equation*}

\end{proposition}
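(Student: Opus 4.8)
The plan is to reduce the proposition to the two free--forgetful adjunctions $\Fr_1 \dashv U_1$ and $\Fr_2 \dashv U_2$ already exhibited above, together with two strict identities among the functors, and then to obtain everything else formally. First I would record the identities. A morphism in $\vocato$ or in $\vocatt$ is by definition nothing but a morphism of underlying $\mcv$-graphs, and the hypothesis $\otimes_1 \geq \otimes_2$ makes the $\otimes_2$-triangle inequality imply the $\otimes_1$-triangle inequality, so that $U^1_2$ is the genuine full embedding already noted; inspecting objects and morphisms then gives $U_1 \circ U^1_2 = U_2$ on the nose. Dually, the defining formula for $\Fr^1_2 X$ refers to the $(\mcv,\otimes_1)$-category $X$ only through the $\mcv$-valued distances $X(x_{i-1},x_i)$, that is, only through the underlying $\mcv$-graph $U_1 X$, and the formula one obtains is verbatim that of $\Fr_2$; hence $\Fr^1_2 = \Fr_2 \circ U_1$ on the nose.

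With these in hand the missing adjunction $\Fr^1_2 \dashv U^1_2$ follows by a Hom-set computation rather than by producing units and counits by hand: for $X \in \vocato$ and $Y \in \vocatt$,
\begin{align*}
\Hom_{\vocatt}(\Fr^1_2 X, Y) &= \Hom_{\vocatt}(\Fr_2 U_1 X, Y) \cong \Hom_{\vgph}(U_1 X, U_2 Y) \\
&= \Hom_{\vgph}(U_1 X, U_1 U^1_2 Y) \cong \Hom_{\vocato}(X, U^1_2 Y),
\end{align*}
using in turn $\Fr_2 \dashv U_2$, the identity $U_2 = U_1 U^1_2$, and the full faithfulness of $U_1$; naturality of the resulting bijection is routine. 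The triangle of right adjoints is precisely the identity $U_2 = U_1 U^1_2$, and the triangle of left adjoints $\Fr_2 = \Fr^1_2 \circ \Fr_1$ then follows from uniqueness of adjoints, since $\Fr^1_2 \Fr_1$ and $\Fr_2$ are both left adjoint to $U_2 = U_1 U^1_2$. (Should one want the latter equality literally rather than up to the canonical isomorphism, expand $\Fr^1_2 \Fr_1 X(a,b)$ as an infimum over paths-of-paths; it collapses to the infimum defining $\Fr_2 X(a,b)$ because $\otimes_2$ distributes over the joins of $\mcv$ and because $\otimes_1 \geq \otimes_2$ lets each inner $\otimes_1$-product be replaced by the corresponding $\otimes_2$-product.)

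I do not anticipate a genuine obstacle here: the statement is essentially a repackaging of the preceding discussion, and the hypothesis $\otimes_1 \geq \otimes_2$ does essential work only in guaranteeing that $U^1_2$ exists (and, if one insists on it, in the optional on-the-nose check of the left-adjoint triangle). The one thing to be careful about is to reuse the free-category adjunctions rather than reprove them, so that the argument stays proportionate to the statement.
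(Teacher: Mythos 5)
Your proposal is correct and follows the same overall route as the paper, which does not give a separate proof of this proposition but simply states it as a summary of the preceding constructions (the two free--forgetful adjunctions $\Fr_i \dashv U_i$ and the pair $\Fr^1_2 \dashv U^1_2$, whose functoriality the paper explicitly leaves to the reader). What you add is a clean formalization: identifying the strict identities $U_1 U^1_2 = U_2$ and $\Fr^1_2 = \Fr_2 U_1$ and then obtaining $\Fr^1_2 \dashv U^1_2$ by the Hom-set computation rather than by directly producing unit and counit. This is a proportionate and correct way to discharge the verification that the paper leaves implicit, and your parenthetical collapse of the paths-of-paths infimum is the right way to see that the left-adjoint triangle commutes strictly and not merely up to isomorphism.
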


\section{Simplicial constructions}
\label{sec:simplicial}

For each $n \in \mathbb{N}$ and for each $(r_1, \dots, r_n) \in \mcv^n$, we can define an object $\Gn \in \vgph$ by declaring that it has $n+1$ vertices $x_0, \dots, x_n$ and that
\begin{equation*}
\Gn (x_i, x_j) =
\left \{ \begin{array}{cc}
0 & \text{if } i = j \\
r_j & \text{if } i = j-1 \\[0.25em]
\infty & \text{otherwise}
\end{array}\right .
\end{equation*}
If we are moreover given a quantale structure on $\mcv$ making it into a quantale $(\mcv,\otimes)$, then we define an object $\Don = \Fr \Gn \in \vocat$. When clear from context we will write $\Don$ to also refer to $U \Don = U \Fr \Gn \in \vgph$, which is only a slight abuse of notation since $U$ is a full embedding.

There is evidently a canonical map $\Gn \rightarrow \Don$ in $\vgph$ given by the unit of the adjunction.

Explicitly we can describe the distances in $\Don$ by
\begin{equation*}
\Don (x_i, x_j) =
\left \{ \begin{array}{cc}
0 & \text{if } i = j \\[0.25em]
\bigotimes\limits_{i+1 \leq k \leq j} r_k & \text{if } i < j \\[1em]
\infty & \text{otherwise}
\end{array} \right .
\end{equation*}

With setup and notation as in Proposition \ref{prop:adjoints}, we deduce that
\begin{equation*}
\begin{tikzcd}[row sep = 8em, column sep = 2.5em]
	& \Gn
	\arrow[mapsto]{dl}[swap, name=f1]{\Fr_1}
	\arrow[mapsto]{dr}[name=f2]{\Fr_2}
		& \\
\Doon
\arrow[mapsto]{rr}[name=f12]{\Fr^1_2}
	&
		& \Dotn
\end{tikzcd}
\end{equation*}
from which we immediately get the following.

\begin{corollary}

Given $(\mcv, \otimes_1)$ and $(\mcv, \otimes_2)$ with $\otimes_1 \geq \otimes_2$, there is a canonical map
\begin{equation*}
\Doon \rightarrow \Dotn
\end{equation*}
for each $r_1, \dots, r_n \in \mcv$, which are components of the unit of the adjunction
\begin{equation*}
\begin{tikzcd}
\vocato
\arrow[shift left=.5em]{r}[name=F]{\Fr^1_2}
		& \vocatt
		\arrow[shift left=.5em]{l}[name=U]{U^1_2}
		\arrow[from=F, to=U, phantom]{}[rotate=90]{\vdash}
\end{tikzcd}
\end{equation*}

\end{corollary}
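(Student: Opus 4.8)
The plan is to recognize the asserted maps as the components of $\eta$, the unit of the adjunction $\Fr^1_2 \dashv U^1_2$, once the relevant objects are matched up. The first step is to record the identity $\Fr_2 = \Fr^1_2 \circ \Fr_1$ of functors $\vgph \to \vocatt$. This is just the commutativity of the lower triangle of left adjoints in Proposition~\ref{prop:adjoints}: since $\otimes_1 \geq \otimes_2$ forces every $(\mcv,\otimes_2)$-category to satisfy the weaker $\otimes_1$-triangle inequality, the forgetful functors compose as $U_1 \circ U^1_2 = U_2$, and passing to left adjoints — which are determined up to canonical isomorphism — yields $\Fr^1_2 \circ \Fr_1 = \Fr_2$. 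This is precisely what the triangle of maps displayed just before the Corollary encodes.

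Evaluating this identity at $\Gn$ gives $\Dotn = \Fr_2\Gn = \Fr^1_2(\Fr_1\Gn) = \Fr^1_2(\Doon)$. Hence the unit of $\Fr^1_2 \dashv U^1_2$ at the object $\Doon = \Fr_1 \Gn$ is a morphism
\[
\eta_{\Doon} : \Doon \longrightarrow U^1_2\Fr^1_2\Doon = U^1_2 \Dotn
\]
in $\vocato$. Because $U^1_2$ is a full embedding (it presents $\vocatt$ as a reflective subcategory of $\vocato$), we suppress it without ambiguity, and $\eta_{\Doon}$ is then exactly the sought canonical map $\Doon \to \Dotn$, by construction a component of the unit of the stated adjunction. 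If naturality in $(r_1,\dots,r_n)$ is wanted, it follows from naturality of $\eta$ together with functoriality of the assignment $(r_1, \dots, r_n) \mapsto \Gn$.

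It is worth unwinding the map concretely as a sanity check: it is the identity on the shared vertex set $\{x_0,\dots,x_n\}$, and it is a well-defined morphism of $\mcv$-graphs because, for $i \leq j$, the iterated $\otimes_1$-product of $r_{i+1},\dots,r_j$ dominates the iterated $\otimes_2$-product of the same tuple — an immediate induction on $j-i$ using monotonicity of the monoidal products and $\otimes_1 \geq \otimes_2$ — so $\Doon(x_i,x_j) \geq \Dotn(x_i,x_j)$, while for $i > j$ both distances are $\infty$; this also matches the explicit formula defining $\Fr^1_2$. There is essentially no obstacle here beyond bookkeeping. The single point requiring care is the direction of the inequality $\otimes_1 \geq \otimes_2$ against the opposite-ordering convention of Remark~\ref{rmk:ordering}, since it is exactly this that makes $U_1 \circ U^1_2 = U_2$ (rather than the other composite) and hence puts the unit of $\Fr^1_2 \dashv U^1_2$ on the correct side.
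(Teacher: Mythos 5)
Your proposal is correct and follows essentially the same route as the paper: the paper simply displays the commuting triangle $\Fr^1_2 \circ \Fr_1 = \Fr_2$ evaluated at $\Gn$ (so that $\Fr^1_2\Doon = \Dotn$) and states the corollary as immediate, which is exactly your identification of the map as $\eta_{\Doon}$ under the full embedding $U^1_2$. Your concrete unwinding at the end is a useful sanity check but goes beyond what the paper records.
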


\subsection{$(\mcv,\otimes)$-categories via lifting conditions}

We briefly explore the relationship between $\mcv$-graphs and $(\mcv, \otimes)$-categories from the perspective motivated by the notation just introduced.

\begin{proposition}
\label{prop:lifting}

Given a quantale $(\mcv, \otimes)$ and $X \in \vgph$, we have that $X$ is a $(\mcv,\otimes)$-category if and only if, for each $n \in \mathbb{N}$ and for each $(r_1, \dots, r_n) \in \mcv^n$, any map $f: \Gn \rightarrow X$ has a unique lift/``filler'' as in
\begin{equation*}
\begin{tikzcd}[row sep = 4em, column sep = 4em]
\Gn
\arrow{r}{f}
\arrow{d}
	& X \\
\Don
\arrow[dotted]{ur}{\exists !}
	&
\end{tikzcd}
\end{equation*}

\end{proposition}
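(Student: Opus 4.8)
The plan is to prove both implications directly from the definitions, using the adjunction $\Fr \dashv U$ as the conceptual engine.

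For the forward direction, suppose $X$ is a $(\mcv,\otimes)$-category and we are given $f\colon \Gn \to X$ in $\vgph$. Since $U$ embeds $\vocat$ as a full subcategory of $\vgph$, and $X = UX$ is in the image of $U$, the universal property of the unit $\Gn \to U\Fr\Gn = \Don$ says precisely that there is a unique morphism $\Don \to X$ in $\vocat$ — equivalently a unique morphism $U\Don \to X$ in $\vgph$ factoring $f$ — which is exactly the required filler. So in fact the forward direction is just a restatement of the universal property of the free functor, once one observes that $\Don = \Fr\Gn$ and $X$ is (the image under $U$ of) an object of $\vocat$.

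For the reverse direction, suppose the lifting/filling condition holds for all $n$ and all $(r_1,\dots,r_n)$. I want to show $X$ satisfies the triangle inequality $X(a,c) \le X(a,b) \otimes X(b,c)$ for every triple $a,b,c \in X$. The natural choice is $n = 2$: set $r_1 = X(a,b)$ and $r_2 = X(b,c)$, and define $f\colon \Gamma^2(r_1,r_2) \to X$ by $x_0 \mapsto a$, $x_1 \mapsto b$, $x_2 \mapsto c$. One checks this is a morphism of $\mcv$-graphs: the only nontrivial edges of $\Gamma^2(r_1,r_2)$ are $x_0 \to x_1$ with value $r_1 = X(a,b)$ and $x_1 \to x_2$ with value $r_2 = X(b,c)$, and on these the condition $\Gamma^2(x_i,x_j) \ge X(fx_i, fx_j)$ reads $X(a,b) \ge X(a,b)$ and $X(b,c) \ge X(b,c)$ (the edge $x_0 \to x_2$ has value $\infty \ge X(a,c)$ automatically). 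Now the filler $\Don[2] = \Delta^2_\otimes(r_1,r_2) \to X$ is again a morphism of $\mcv$-graphs, and applying the defining inequality to the edge $x_0 \to x_2$, whose value in $\Delta^2_\otimes(r_1,r_2)$ is $r_1 \otimes r_2$ by the explicit description of $\Don$, gives $r_1 \otimes r_2 \ge X(a,c)$, i.e. $X(a,b)\otimes X(b,c) \ge X(a,c)$, which (recalling that $\ge$ on $\mcv$ is $\le$ on $\Vop$, per Remark \ref{rmk:ordering}) is exactly the triangle inequality.

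I don't anticipate a serious obstacle here; the argument is essentially bookkeeping around the unit of the adjunction plus a single well-chosen test map. The one point that deserves care is the direction of inequalities: the morphism condition in $\vgph$ is stated as $X(a,b) \ge Y(fa,fb)$, so one must be attentive that "filler is a $\mcv$-graph map" yields the inequality pointing the right way to reproduce Definition \ref{def:vocat}. It is also worth remarking explicitly why $n=2$ suffices rather than needing all $n$: for the reverse direction only the $n=2$ instances are used, while the forward direction produces fillers for all $n$ for free; this asymmetry is harmless since we are proving an "if and only if" and may use whichever instances are convenient in each direction. Finally, uniqueness of the filler in the reverse direction need not be invoked at all — existence alone gives the triangle inequality — but it is automatic from the universal property once we know $X$ is a $(\mcv,\otimes)$-category, closing the loop.
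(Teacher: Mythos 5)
Your proof is correct and follows essentially the same two-step argument as the paper: the reverse direction via the $n=2$ test map $\Gamma^2(X(a,b),X(b,c))\to X$ and reading off the inequality on the $x_0\to x_2$ edge of the filler, and the forward direction via the universal property of the unit $\Gn\to\Don$ of the adjunction $\Fr\dashv U$ together with fullness of $U$. Your additional remarks (that only existence, not uniqueness, of the filler is needed in the reverse direction, and the care taken with the direction of the inequality) are accurate but do not change the structure of the argument.
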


\begin{proof}

Assume that $X \in \vgph$ satisfies the lifting condition. Pick any three vertices $a,b,c \in X$, and let $r = X(a,b)$, $s=X(b,c)$. Then by hypothesis the evident map $\Gamma^2(r,s) \rightarrow X$ has a lift $\Delta^2_{\otimes} (r,s) \rightarrow X$. But then we must have that $X(a,c) \leq r \otimes s = X(a,b) \otimes X(b,c)$.

Now assume that $X \in \vgph$ is in fact a $(\mcv, \otimes)$-category, so that it is in the image of the forgetful functor $U$. Then any map $\Gn \rightarrow X$ must factor uniquely through $\Gn \rightarrow \Don$ since this last map is the unit of the adjunction.

\end{proof}

We may think of Proposition \ref{prop:lifting} above as a kind of Segal condition for $\mcv$-graphs, which casts the triangle inequality as a kind of path composition (although not a particularly sophisticated one, since all paths are unique in this context).

Following this perspective further, we can thus think of applying $U \Fr$ to a $\mcv$-graph $X$ as a sort of fibrant replacement, in analogy with the usual fibrant replacement (pick any one) of simplicial sets that takes a simplicial set and returns an approximation of the original with all of the path compositions added in.

\subsection{A nerve functor}
\label{subsec:nerve}

Fix a quantale $(\mcv, \otimes)$. For each $X \in \vgph$ and each $n \in \mathbb{N}$, there is a functor $\widetilde{X}_n: (\Vop)^n \rightarrow \Set$ given by $\widetilde{X}_n(r_1, \dots, r_n) = \vgph ( \Don, X )$ whose action on morphisms $(r_1, \dots, r_n) \rightarrow (s_1, \dots, s_n)$ of $(\Vop)^n$ is the precomposition
\begin{equation*}
\vgph ( \Don, X ) \xrightarrow{\sigma^{\ast}} \vgph ( \Dons, X )
\end{equation*}
by the evident map $\sigma: \Dons \rightarrow \Don$.

Also for each $n \in \mathbb{N}$ there is a functor $\bigotimes: (\Vop)^n \rightarrow \Vop$ given by $(r_1, \dots r_n) \mapsto \bigotimes\limits_i r_i$. Taking the left Kan extension along $\bigotimes$, we get a functor
\begin{equation*}
X_n = \Lan_\otimes \widetilde{X}^n : \Vop \rightarrow \Set
\end{equation*}
i.e. a presheaf $X_n$ on $\mcv$.

\begin{proposition}
\label{prop:presimp}
\item

\begin{enumerate}

\item $X_n(r)$ is the set of $(n+1)$-tuples $(x_0, \dots, x_n)$ of vertices of $X$ such that $\exists (r_1, \dots, r_n) \in \mcv^n$ for which $\bigotimes\limits_{1 \leq i \leq n} r_i \leq r$, and for each $0 \leq i \leq j \leq n$,
	\begin{equation*}
	X(x_i, x_j) \leq \bigotimes\limits_{i+1 \leq k \leq j} r_k
	\end{equation*}

\item The action of $X_n$ on $r \leq s$ is the evident inclusion $X_n(r) \rightarrow X_n(s)$.

\end{enumerate}

\end{proposition}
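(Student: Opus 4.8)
The plan is to compute the left Kan extension pointwise. By the standard colimit formula, $X_n(r) = \colim_{\bigotimes \downarrow r} \widetilde{X}_n$, where $\bigotimes \downarrow r$ is the comma category whose objects are pairs $\big((r_1,\dots,r_n),\, \bigotimes_i r_i \to r\big)$; since $\Vop$ and $(\Vop)^n$ are posets, this is just the sub-poset of $(\Vop)^n$ on those $(r_1,\dots,r_n)$ with $\bigotimes_i r_i \leq r$, ordered by restriction of the product order. First I would unwind $\widetilde{X}_n(r_1,\dots,r_n)=\vgph(\Don,X)$ using the explicit description of $\Don$ recorded above together with the definition of a morphism in $\vgph$: a map $\Don \to X$ is exactly a choice of vertices $x_0,\dots,x_n$ of $X$ (the images of the vertices of $\Don$) such that $X(x_i,x_j)\leq \bigotimes_{i+1\leq k\leq j} r_k$ for each $0 \leq i \leq j \leq n$ — the case $i=j$ being automatic from $X(x_i,x_i)=0$, and the values $\Don(x_i,x_j)=\infty$ for $i>j$ making all remaining constraints vacuous. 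Under this identification the transition map $\sigma^{\ast}$ attached to a morphism $(r_1,\dots,r_n)\to(s_1,\dots,s_n)$ is the literal subset inclusion: $\sigma$ is the identity on vertices and $\bigotimes_{i+1\leq k\leq j} r_k \leq \bigotimes_{i+1\leq k\leq j} s_k$, so a system of vertices valid for $(r_1,\dots,r_n)$ is a fortiori valid for $(s_1,\dots,s_n)$.

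It then remains to compute the colimit of this diagram of subsets of $X^{n+1}$ with inclusions as transition maps. The set described in part (1) is precisely the union of the sets $\widetilde{X}_n(r_1,\dots,r_n)$ over all objects $(r_1,\dots,r_n)$ of the comma category, and it carries an evident cocone, so there is a canonical comparison map from the colimit onto it; this map is visibly surjective, and the content is its injectivity. For that I would show that whenever a single tuple $(x_0,\dots,x_n)$ lies in both $\widetilde{X}_n(r_1,\dots,r_n)$ and $\widetilde{X}_n(s_1,\dots,s_n)$ for two objects of the comma category, the two ``certificates'' $(r_1,\dots,r_n)$ and $(s_1,\dots,s_n)$ are joined by a zig-zag inside the sub-poset of certificates that validate $(x_0,\dots,x_n)$, so that they name the same element of the colimit. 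Granting that the comparison map is a bijection, part (2) is then formal: for $r \leq s$ the comma category $\bigotimes \downarrow r$ includes into $\bigotimes \downarrow s$, the colimit computing $X_n(s)$ is taken over the larger index category, and under the above identifications the induced map $X_n(r)\to X_n(s)$ is exactly the inclusion of the corresponding sets of tuples. (One could instead verify the universal property of $\Lan_\otimes$ directly, but this meets the same well-definedness issue over the choice of certificate.)

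The main obstacle is the injectivity step — showing that the colimit of the sets $\widetilde{X}_n(r_1,\dots,r_n)$ genuinely collapses to their union. Some care is needed here: one cannot in general replace two certificates by their componentwise meet (which may violate the inequalities $X(x_i,x_j)\leq \bigotimes_{i+1\leq k\leq j} r_k$) nor by their componentwise join (whose $\otimes$-total may exceed $r$), so connectedness of the poset of valid certificates for a fixed tuple calls for an honest argument rather than a lattice-theoretic shortcut. A clean special case where everything is transparent: if $X$ already satisfies the triangle inequality for $\otimes$, then the certificate $(X(x_0,x_1),\dots,X(x_{n-1},x_n))$ validates $(x_0,\dots,x_n)$ and lies below every other certificate, so the relevant poset has a least element and connectedness is immediate. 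The remaining ingredients — identifying the comma category, unwinding $\vgph(\Don,X)$, and checking functoriality in $r$ — are routine bookkeeping with the explicit descriptions already in place.
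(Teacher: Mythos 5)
You are right to flag the injectivity step as the crux, and in fact the difficulty is more serious than a technical wrinkle. The paper's own proof uses precisely the lattice-theoretic shortcut you warn against: it asserts that for two certificates $(r_\bullet)$ and $(s_\bullet)$ validating the same tuple, the componentwise meet $t_i=\min(r_i,s_i)$ again validates it, giving a roof $\Don \leftarrow \Dont \to \Dons$. As you observe, this fails, since shrinking $r_k$ can break $X(x_i,x_j)\leq\bigotimes_{i+1\leq k\leq j} t_k$. (It does hold when $X$ is already a $(\mcv,\otimes)$-category, which is exactly your ``clean special case.'') Your instinct that an ``honest argument'' is needed is therefore sound, but your proposal leaves that argument unsupplied, and so is incomplete as it stands.

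More is true: the gap is not fillable, because the proposition is false for a general $\mcv$-graph. Take $\mcv=\mathbb{R}$, $\otimes=+$, $n=2$, $r=5$, and let $X$ have three vertices with $X(x_0,x_1)=X(x_1,x_2)=1$ and $X(x_0,x_2)=5$ (so $X\in\rgph$ but $X\notin\rpcat$). The certificates in the comma category $(\otimes\downarrow 5)$ validating $(x_0,x_1,x_2)$ are exactly $\{(r_1,r_2): r_1\geq 1,\ r_2\geq 1,\ r_1+r_2=5\}$. This is an antichain in $(\Rop)^2$ (two distinct pairs with the same sum are never coordinatewise comparable), so it has infinitely many connected components, and the colimit computing $X_2(5)$ contains infinitely many distinct classes all sitting over the single tuple $(x_0,x_1,x_2)$ -- contradicting the stated bijection with tuples. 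The paper's auxiliary ``iff'' claim about $\sim$ being detected by a single roof with $t_i\leq\min(r_i,s_i)$ fails for the same reason.

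Two restrictions rescue the statement, and between them they cover every actual use in the paper. First, if $X\in\vocat$: your argument that $(X(x_0,x_1),\dots,X(x_{n-1},x_n))$ is a least element of the poset of valid certificates is correct (its $\otimes$-total is dominated by the total of any certificate, hence $\leq r$, and the triangle inequality supplies the mid-range constraints), and a poset with a least element is connected. This is the case relevant to Corollary \ref{cor:vcatnerve} and the magnitude-nerve side of the story. Second, if $\otimes=\max$ (but $X\in\vgph$ arbitrary): here the componentwise \emph{join} $\max(r_k,s_k)$ does the job, because $\bigotimes_k\max(r_k,s_k)=\max\bigl(\bigotimes_k r_k,\ \bigotimes_k s_k\bigr)\leq r$, so the poset of valid certificates is closed under binary joins and hence connected. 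This covers Theorem \ref{thm:vrcomplex}(\ref{nmax}) and the Vietoris--Rips side. So the cleanest repair is to restrict the proposition's first clause either to $\otimes=\max$ or to $X\in\vocat$, run your least-element argument in the latter case and the join-closure argument in the former, and note that these are the only cases invoked downstream. Part (2) of your proposal (functoriality in $r$) is fine once the bijection is in place.
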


\begin{proof}

Unpacking the formula for the left Kan extension, for each $r \in \mcv$ we have the following description of $X_n(r)$:
\begin{equation*}
X_n(r) = \left . \left \{ f: \Don \rightarrow X \mid \bigotimes_i r_i \leq r \right \} \right / \sim
\end{equation*}
where $\sim$ is the equivalence relation generated by the reflexive binary relation $\sim^{\prime}$, where $(f: \Don \rightarrow X) \sim^{\prime} (g: \Dons \rightarrow X)$ when $r_i \leq s_i$ for each $1 \leq i \leq n$ so that there is an evident map $\sigma: \Dons \rightarrow \Don$, and $g = \sigma^{\ast} f$.

It is easy to see that $(f: \Don \rightarrow X) \sim (g: \Dons \rightarrow X)$ iff there is some $(t_1, \dots, t_n)$ with $t_i \leq \min (r_i, s_i)$ so that there are evident maps $\rho: \Don \rightarrow \Dont$, $\sigma: \Dons \rightarrow \Dont$, and some $h: \Dont \rightarrow X$ such that $\rho^{\ast} h = f$ and $\sigma^{\ast} h = g$.

Then given an equivalence class of maps $(f: \Don \rightarrow X)$, we know that all of them must span exactly the same tuple $(x_0, \dots, x_n)$ of vertices in $X$. Conversely, any two maps $(f: \Don \rightarrow X)$ and $(g: \Dons \rightarrow X)$ which span the same tuple of vertices in $X$, taking $t_i = \min(r_i, s_i)$ there is clearly a map $(h: \Dont \rightarrow X)$ witnessing the equivalence of $f$ and $g$.

This shows that $X_n(r)$ is in bijective correspondence with the set of $(n+1)$-tuples of vertices of $X$ for each of which there exists \emph{some} map $f: \Don \rightarrow X$ with $\bigotimes\limits_{1 \leq i \leq n} r_i$. But this is exactly the statement that there is some $(r_1, \dots, r_n) \in \mcv^n$ such that $\bigotimes\limits_{1 \leq i \leq n} r_i \leq r$ and such that for each $1 \leq i < j \leq n$, $X(x_i, x_j) \leq \bigotimes\limits_{i+1 \leq k \leq j} r_k$.

For the second part, the Kan extension formula already guarantees us that for $r \leq s$, the inclusion
\begin{equation*}
\{f: \Don \rightarrow X \mid \bigotimes\limits_i r_i \leq r \} \hookrightarrow \{f: \Don \rightarrow X \mid \bigotimes\limits_i r_i \leq s \}
\end{equation*}
canonically descends to a map $X_n(r) \rightarrow X_n(s)$ that gives the action of $X_n$ on $r \leq s$; what we have shown then also makes it clear that this resulting map is also an inclusion.

\end{proof}

We claim that all of these $X_n$ assemble to give a functor $X: \Delta^{\text{op}} \rightarrow \PSh(\mcv)$, i.e. an object $X \in \PSh(\mcv \times \Delta)$.

\begin{proposition}
\label{prop:simpobj}

Given $X \in \vgph$, the assignment
\begin{equation*}
[n] \mapsto X_n
\end{equation*}
extends to a functor
\begin{equation*}
N_{\otimes}(X): \Dop \rightarrow \PSh(\mcv).
\end{equation*}
\end{proposition}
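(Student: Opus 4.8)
The plan is to use the explicit description of the presheaves $X_n$ furnished by Proposition~\ref{prop:presimp} and to write down the simplicial operators by hand, then verify the (routine) functor axioms. A functor $N_\otimes(X)\colon \Dop \to \PSh(\mcv)$ with $[n]\mapsto X_n$ is the same datum as an assignment, to each order-preserving $\alpha\colon [m]\to[n]$, of a natural transformation $\alpha^*\colon X_n\to X_m$ compatible with composition and identities. I would define $\alpha^*$ at each $r\in\mcv$ to be the vertex-reindexing map
\[
\alpha^*\colon X_n(r)\longrightarrow X_m(r),\qquad (x_0,\dots,x_n)\longmapsto(x_{\alpha(0)},\dots,x_{\alpha(m)}).
\]

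The only substantive step is checking that $\alpha^*$ lands in $X_m(r)$. Given $(x_0,\dots,x_n)\in X_n(r)$ with a witnessing tuple $(r_1,\dots,r_n)$ as in Proposition~\ref{prop:presimp}, I would exhibit as a witness for $(x_{\alpha(0)},\dots,x_{\alpha(m)})$ the tuple $(s_1,\dots,s_m)$ with $s_l=\bigotimes_{\alpha(l-1)<k\le\alpha(l)}r_k$, the empty tensor product (occurring exactly when $\alpha(l-1)=\alpha(l)$) being taken to be the unit $0$. Since $\alpha$ is order-preserving, for any $0\le i\le j\le m$ the intervals $(\alpha(l-1),\alpha(l)]$ with $l=i+1,\dots,j$ partition $(\alpha(i),\alpha(j)]$ in order, so associativity of $\otimes$ gives the telescoping identity $\bigotimes_{i<l\le j}s_l=\bigotimes_{\alpha(i)<k\le\alpha(j)}r_k$. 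Taking $i=0$, $j=m$ yields $\bigotimes_{1\le l\le m}s_l=\bigotimes_{\alpha(0)<k\le\alpha(m)}r_k$; since $\bigotimes_{k=1}^{n}r_k=\bigl(\bigotimes_{k\le\alpha(0)}r_k\bigr)\otimes\bigl(\bigotimes_{1\le l\le m}s_l\bigr)\otimes\bigl(\bigotimes_{k>\alpha(m)}r_k\bigr)$ and $0$ (the unit) is the $\le$-least element of $\mcv$, monotonicity of $\otimes$ gives $\bigotimes_{1\le l\le m}s_l=0\otimes\bigl(\bigotimes_l s_l\bigr)\otimes 0\le\bigotimes_{k=1}^{n}r_k\le r$. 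For the simplex inequality, applying the witness property of $(r_1,\dots,r_n)$ to the pair $\alpha(i)\le\alpha(j)$ and then the telescoping identity yields $X(x_{\alpha(i)},x_{\alpha(j)})\le\bigotimes_{\alpha(i)<k\le\alpha(j)}r_k=\bigotimes_{i<l\le j}s_l$, which is exactly what is needed.

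Naturality of $\alpha^*$ in $r$ is automatic: by Proposition~\ref{prop:presimp}(2) the structure maps $X_n(r)\to X_n(r')$ and $X_m(r)\to X_m(r')$ for $r\le r'$ are set-theoretic inclusions of vertex-tuples, and in every simplicial degree $\alpha^*$ is the restriction of the single function $(x_0,\dots,x_n)\mapsto(x_{\alpha(0)},\dots,x_{\alpha(m)})$, so the naturality squares commute on the nose; hence each $\alpha^*$ is a morphism in $\PSh(\mcv)$. Functoriality, $(\alpha\circ\beta)^*=\beta^*\circ\alpha^*$ and $(\mathrm{id}_{[n]})^*=\mathrm{id}_{X_n}$, is immediate from the reindexing description, since composing reindexings composes the underlying order-preserving maps.

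The ``hard part'' is only bookkeeping: choosing the witness tuple and checking the two inequalities. The facts it rests on, beyond associativity and monotonicity of $\otimes$, are (i) that enlarging a tensor product by additional factors increases it in the $\le$-order --- which is precisely where $0$ being the terminal object of $\mcv$ (hence $\le$-minimal) and simultaneously the monoidal unit is used --- and (ii) the convention that the empty tensor product is $0$, which on degeneracies reproduces the defining identity $X(a,a)=0$ of a $\mcv$-graph and so is harmless. No new idea is required; alternatively one could observe that $([n],(r_1,\dots,r_n))\mapsto\Don$ is functorial on a suitable category of $\otimes$-decorated simplices and read off $N_\otimes(X)$ from the general nerve/Kan-extension formalism, but the direct verification above is shorter.
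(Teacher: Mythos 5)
Your proof is correct and follows essentially the same approach as the paper's: defining the simplicial operators by vertex-reindexing and checking well-definedness. The only difference is one of completeness --- you treat general order-preserving $\alpha$ at once (subsuming faces and degeneracies and making functoriality automatic) and actually carry out the witness-tuple construction showing $\alpha^*$ lands in $X_m(r)$, where semicartesianness (the unit $0$ being $\leq$-least) is the key point, whereas the paper dismisses this as a "straightforward verification."
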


Since $\PSh(\Delta, \PSh(\mcv)) \cong \PSh(\mcv \times \Delta) \cong \sPSh(\mcv)$, this means that for each $X \in \vgph$ we get (abusing notation) a simplicial presheaf $N_{\otimes}(X)$ on $\mcv$. Henceforth we will move freely between these perspectives on $N_{\otimes}(X)$; to eliminate any confusion, we will use the notation $N_{\otimes}(X)(r, [n])$ when speaking of $N_{\otimes}(X)$ as an object of $\PSh(\mcv \times \Delta)$, and $N_{\otimes}(X)(r)_{n}$ when speaking of $X$ as an object of $\sPSh(\mcv)$.

\begin{proof}[Proof of Proposition \ref{prop:simpobj}]

We will show that the assignment $(r, [n]) \mapsto X_n(r)$ for each $(r, [n]) \in \mcv \times \Delta$ defines a functor $N_{\otimes}(X): (\Vop \times \Dop) \rightarrow \Set$ such that for each $r \leq s$ the map $N_{\otimes}(X)(r \leq s): X_n(r) \rightarrow X_n(s)$ is the evident inclusion.

For each $(r, [n]) \in \mcv \times \Delta$ and an $(n+1)$-tuple $(x_0, \dots, x_n) \in X_n(r)$, the $i^{\text{th}}$ face map omits the $i^{\text{th}}$ vertex and returns the $n$-tuple $(x_0, \dots, \hat{x}_i, \dots, x_n) \in X_{n-1}(r)$, while the $i^{\text{th}}$ degeneracy map repeats the $i^{\text{th}}$ vertex and returns the $(n+2)$-tuple $(x_0, \dots, x_i, x_i, \dots, x_n) \in X_{n+1}(r)$. It is a straightforward verification that this (is well defined and) satisfies the requisite simplicial identities.

For each $r \leq s$ we just define the map $N_{\otimes}(X)(r \leq s): X_n(r) \rightarrow X_n(s)$ to be the evident inclusion; it is clear that this commutes with the face and degeneracy maps given above.

\end{proof}

From how we defined it, the construction of $N_{\otimes}(X)$ is evidently natural in $X \in \vgph$, so that we get a functor $N_{\otimes}: \vgph \rightarrow \sPSh(\mcv)$. We should emphasize that, as the notation suggests, $N_{\otimes}$ depends not only on the underlying frame $\mcv$ but also on the quantale structure.

We observe the following relationship between $N_{\otimes}$ and $N_{\max}$:

\begin{lemma}
\label{lem:nattrans}

Given any $(\mcv, \otimes)$ there is a natural inclusion
\begin{equation*}
\begin{tikzcd}[column sep = 6em]
N_{\otimes}
\arrow{r}{\eta}
	& N_{\max}
\end{tikzcd}
\end{equation*}
of $N_{\otimes}$ as a subfunctor of $N_{\max}$.

\end{lemma}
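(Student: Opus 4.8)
The plan is to compare the explicit descriptions of $X_n(r)$ for the two quantale structures $(\mcv, \otimes)$ and $(\mcv, \max)$ given by Proposition~\ref{prop:presimp}, and observe that one condition is strictly weaker than the other. Recall that $\max$ is the meet operation, which satisfies $r \otimes s \geq r \max s = r \wedge s$ for every semicartesian $\otimes$ (since the unit $0$ is terminal, $r \otimes s \to r$ and $r \otimes s \to s$, i.e. $r \otimes s \leq r$ and $r \otimes s \leq s$ in the ordering of $\mcv$, which in the $\leq$ convention of Remark~\ref{rmk:ordering} reads $r \otimes s \geq r \wedge s$). In other words, $\max \geq \otimes$ in the sense of Proposition~\ref{prop:adjoints}, so we are in exactly the situation of that proposition with $\otimes_1 = \max$ and $\otimes_2 = \otimes$.

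First I would fix $X \in \vgph$ and $r \in \mcv$ and show $N_\otimes(X)(r)_n \subseteq N_{\max}(X)(r)_n$ as subsets of the set of $(n+1)$-tuples of vertices. By Proposition~\ref{prop:presimp}, an $(n+1)$-tuple $(x_0, \dots, x_n)$ lies in $N_\otimes(X)(r)_n$ iff there exist $(r_1, \dots, r_n)$ with $\bigotimes_i r_i \leq r$ and $X(x_i, x_j) \leq \bigotimes_{i+1 \leq k \leq j} r_k$ for all $i \leq j$. Given such $(r_i)$, the same tuple witnesses membership in $N_{\max}(X)(r)_n$: since $\bigwedge_i r_i = \max_i r_i \leq \bigotimes_i r_i \leq r$, and likewise $\max_{i+1 \leq k \leq j} r_k \leq \bigotimes_{i+1 \leq k \leq j} r_k$, the inequalities $X(x_i, x_j) \leq \max_{i+1 \leq k \leq j} r_k$ follow. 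Hence $\eta_X$ at level $(r, [n])$ may be defined as this subset inclusion.

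Next I would check naturality in all three variables: in $r$ (the transition maps of both presheaves are the evident subset inclusions by Proposition~\ref{prop:presimp}(2), so the square commutes trivially), in $[n] \in \Delta$ (face and degeneracy maps act by the same formulas — omitting or repeating a vertex — in both $N_\otimes(X)$ and $N_{\max}(X)$, per the proof of Proposition~\ref{prop:simpobj}, so inclusions commute with them), and in $X \in \vgph$ (a morphism $X \to Y$ acts by postcomposition on tuples of vertices in the same way for both, so the naturality square is again a commuting diagram of subset inclusions). That $\eta$ exhibits $N_\otimes$ as a \emph{subfunctor} is then immediate from the componentwise injectivity established above.

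I do not expect any serious obstacle here: the lemma is essentially a bookkeeping consequence of $\otimes$ being semicartesian together with the explicit formulas already proved. The only point requiring a moment's care is getting the direction of the inequality $\max \geq \otimes$ right under the order-reversal convention of Remark~\ref{rmk:ordering} — it is precisely the semicartesian condition that the unit is terminal which forces $r \otimes s$ to be \emph{below} both $r$ and $s$ in $\mcv$, hence \emph{above} their meet in the $\leq$-ordering — and then threading this through the two instances of Proposition~\ref{prop:presimp}.
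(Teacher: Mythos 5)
Your argument contains a genuine gap at the key step.  You claim that a tuple $(x_0,\dots,x_n) \in N_\otimes(X)(r)_n$ witnessed by some $(r_1,\dots,r_n)$ lies in $N_{\max}(X)(r)_n$ with \emph{the same} witnesses $(r_1,\dots,r_n)$, reasoning that ``$\max_{i+1\leq k\leq j} r_k \leq \bigotimes_{i+1\leq k\leq j} r_k$, so the inequalities $X(x_i,x_j) \leq \max_{i+1\leq k\leq j} r_k$ follow.''  But this is an inference in the wrong direction: from $X(x_i,x_j) \leq \bigotimes_{i+1\leq k\leq j} r_k$ and $\max_{i+1\leq k\leq j} r_k \leq \bigotimes_{i+1\leq k\leq j} r_k$ one cannot conclude $X(x_i,x_j) \leq \max_{i+1\leq k\leq j} r_k$; you would need the opposite inequality $\max \geq \bigotimes$, which is false.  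A concrete counterexample with $(\mcv,\otimes) = (\mathbb{R},+)$ and $n=2$: take $r_1 = r_2 = 1$ and $X(x_0,x_2) = 2$.  Then $(x_0,x_1,x_2)$ can lie in $N_+(X)(2)_2$ with witnesses $(1,1)$, but $X(x_0,x_2) = 2 \not\leq \max(1,1) = 1$, so $(1,1)$ does \emph{not} witness membership in $N_{\max}(X)(2)_2$.

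The fix, which is what the paper does, is to change witnesses: set $s_k = \bigotimes_{1\leq i\leq n} r_i$ uniformly for all $k$.  Then $\max_k s_k = \bigotimes_i r_i \leq r$, and for $i \leq j$ one has $X(x_i,x_j) \leq \bigotimes_{i+1\leq k\leq j} r_k \leq \bigotimes_{1\leq k\leq n} r_k = \max_{i+1\leq k\leq j} s_k$, where the middle inequality uses that partial $\otimes$-products are bounded above by the full product (since $0 \leq t$ for all $t$ and $\otimes$ is order-preserving with unit $0$).  Note also a smaller bookkeeping slip: having correctly derived $r \otimes s \geq r \wedge s = \max(r,s)$, you then write ``in other words $\max \geq \otimes$'' --- it is the reverse, $\otimes \geq \max$, so in the notation of Proposition~\ref{prop:adjoints} one has $\otimes_1 = \otimes$ and $\otimes_2 = \max$.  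This mislabeling does not affect your argument directly, but it is the same order-reversal confusion that underlies the main error.  Once the witness change is made, the remainder of your naturality checks (in $r$, in $[n]$, and in $X$) are fine and match the paper's reasoning.
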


\begin{proof}

We want to show that there is a natural inclusion of
\begin{equation*}
N_{\otimes}(X)(r)_n = \left \{ (x_0, \dots, x_n) \in X^{n+1} \biggm| \begin{array}{l}
\exists \, r_1, \dots, r_n \text{ such that } \bigotimes\limits_{1 \leq i \leq n} r_i \leq r \\
\text{and } \forall \, i \leq j , \; \; X(x_i, x_j) \leq \bigotimes\limits_{i+1 \leq k \leq j} r_k
\end{array}        \right \}
\end{equation*}
into
\begin{equation*}
N_{\max}(X)(r)_n = \left \{ (x_0, \dots, x_n) \in X^{n+1} \biggm| \begin{array}{l}
\exists \, r_1, \dots, r_n \text{ such that } \max\limits_{1 \leq i \leq n} r_i \leq r \\
\text{and } \forall \, i \leq j , \; \; X(x_i, x_j) \leq \max\limits_{i+1 \leq k \leq j} r_k
\end{array}        \right \}
\end{equation*}

So given $(x_0, \dots, x_n) \in N_{\otimes}(X)(r)_n$, there are $r_1, \dots, r_n \in \mathbb{R}$ such that $\bigotimes\limits_{1 \leq i \leq n} r_i \leq r$ and such that for all $i \leq j$, $X(x_i, x_j) \leq \bigotimes\limits_{i+1 \leq k \leq j} r_k$, we can take $s_k = \bigotimes\limits_{1 \leq i \leq n} r_i$ uniformly for all $1 \leq k \leq n$. Then for this choice of $s_i$ we have that
\begin{equation*}
\max\limits_{1 \leq i \leq n} s_i = \bigotimes\limits_{1 \leq i \leq n} r_i \leq r
\end{equation*}
while for all $1 \leq i \leq j \leq n$,
\begin{equation*}
X(x_i, x_j) \leq \bigotimes\limits_{i+1 \leq k \leq j} r_k \leq \bigotimes\limits_{1 \leq k \leq n} r_k \leq s_1 = \max\limits_{i+1 \leq k \leq j} s_k
\end{equation*}
which exhibits $(x_0, \dots, x_n)$ as an element of $N_{\max}(X)(r)_n$.

The fact that the tuple $(x_0, \dots, x_n) \in X^{n+1}$ as an element of $N_{\otimes}(X)(r)_n$ gets sent to the \emph{same} tuple $(x_0, \dots, x_n)$ considered as an element of $N_{\max}(X)(r)_n$ guarantees naturality of this inclusion.

\end{proof}

\subsection{The nerve-realization paradigm}

The construction of $N_{\otimes}$ clarifies in which sense it may be considered a nerve functor: for $X \in \vgph$, $N_{\otimes}(X) \in \sPSh(\mcv)$ is the ``singular complex indexed by total length of the singular simplices''. Moreover, in the special case that we take the default quantale structure on a frame $\mcv$ by setting $(\mcv, \otimes) = (\mcv, \max)$, the corresponding functor $N_{\max}$ coincides with the usual formal notion of \emph{nerve} as part of the nerve-realization paradigm, which we now describe briefly. (The author first learned of this formal perspective on $N_{\max}$ from Emily Riehl.)

We define a functor $G_0: (\mcv \times \Delta) \rightarrow \vgph$ by the assignment
\begin{equation*}
(r, [n]) \mapsto \Delta^n_{\max}(r, \dots, r)
\end{equation*}
with the evident action on morphisms.

It is an easy observation that every object of $\vgph$ is canonically a colimit of a diagram of the objects $\Delta^n_{\max}(r, \dots, r)$ (it even suffices to restrict to those objects where $n \leq 1$); we say that the objects $\Delta^n_{\max}(r, \dots, r)$ are \emph{dense} in $\vgph$. We should also note that for a \emph{fixed} $r \in \mcv$, the full subcategory of $\vgph$ on the objects $\Delta^n_{\max}(r, \dots, r)$ is clearly isomorphic to the simplex category $\Delta$.
We may take the left Kan extension $G = \operatorname{Lan}_{\yon} G_0$ along the Yoneda embedding $\yon: (\mcv \times \Delta) \rightarrow \PSh(\mcv \times \Delta)$ as in the diagram
\begin{equation*}
\begin{tikzcd}
	& \vgph \\
\mcv \times \Delta
\arrow{ur}{G_0}
\arrow{r}[swap]{\yon}
	& \PSh(\mcv \times \Delta)
	\arrow{u}[swap]{G}
\end{tikzcd}
\end{equation*}
which preserves colimits and thus has a right adjoint $N: \vgph \rightarrow \PSh(\mcv \times \Delta) \cong \sPSh(\mcv)$ that is fully faithful for formal reasons. The functors $G$ and $N$ are respectively the ``realization'' and ``nerve'' functors of the nerve-realization paradigm as applied to our setting.

Just from definitions and the Yoneda lemma we must have that, for $X \in \vgph$ and for each $(r, [n]) \in \mcv \times \Delta$,
\begin{equation*}
N (X) (r, [n]) = \vgph \left ( G(r, [n]) , X \right ).
\end{equation*}
Now we may regard $N(X)$ as either an object of $\PSh(\mcv \times \Delta)$ or as an object of $\sPSh(\mcv)$; for each $r \in \mcv$ and each $[n] \in \Delta$, we have that $N(X)(r)_n$ - the set of $n$-simplices of the simplicial set $N(X)(r)$ - is given by $\vgph \left ( G(r, [n]) , X \right )$, where $G(r,[n]) = G_0(r,[n]) = \Delta^n_{\max}(r, \dots, r)$. Thus the set of $n$-simplices in $N(X)(r)$ is given by the set of maps (in $\vgph$) from $\Delta^n_{\max}(r, \dots, r)$ into $X$. This is exactly the set of $(n+1)$-tuples $(x_0, \dots, x_n)$ of vertices of $X$ such that $X(x_i, x_j) \leq r$ for each $0 \leq i \leq j \leq n$, with the $i^{\text{th}}$ face (resp. degeneracy) maps given by omitting (resp. repeating) the $i^{\text{th}}$ vertex. But then by Theorem \ref{thm:vrcomplex} (\ref{nmax}) we have that $N(X)(r)$ is exactly $N_{\max} (X)(r)$. Furthermore it is not hard to see that $N(X)$ and $N_{\max}(X)$ agree on morphisms $r \leq s$, and also that $N$ and $N_{\max}$ agree on maps $X \rightarrow Y$, so that $N = N_{\max}$. Thus $N_{\max}$ occurs naturally as an instance of the nerve-realization paradigm; the construction of $N_{\otimes}$ for other quantale structures $\otimes$ on the frame $\mcv$ is just  an extension of this framework.

\begin{remark}
\label{rmk:cosk}

We will not have occasion to use this, but we note that $N$ above actually embeds $\vgph$ as the full reflective subcategory of $\PSh(\mcv \times \Delta)$ on the pointwise $1$-coskeletal objects, with $G$ being the reflection.

\end{remark}

\subsection{The Vietoris-Rips complex}

In the special case that we take $(\mcv, \max)$ as our quantale for some frame $\mcv$, then $N_{\max}(X)$ has a simple description, as we show in Theorem \ref{thm:vrcomplex} below. If furthermore we take $\mcv = \mathbb{R}$, then this yields precisely the familiar Vietoris-Rips complex (whose construction is detailed in e.g. \cite{ghrist}), which we also show in Theorem \ref{thm:vrcomplex}, in part by appealing to results shown in \cite{omar}. We first recall some relevant definitions.

\begin{definition}
\label{def:cplx}

\item

\begin{enumerate}

\item A \emph{simplicial complex} $X$ is specified by the following data:

	\begin{enumerate}

	\item A set $V_X$ of \emph{vertices};

	\item A subset $S_X \subseteq \mathcal{P}^{\text{fin}}_{\neq \emptyset} (V_X)$ of the poset (ordered by inclusion) of nonempty finite subsets of $V_X$, such that
		\begin{enumerate}

		\item $S_X$ is downward closed as a subposet of $\mathcal{P}^{\text{fin}}_{\neq \emptyset} (V_X)$;

		\item $S_X$ contains all singletons.

		\end{enumerate}

	A set $\{x_0, \dots, x_n\} \in S_X$ of $n+1$ vertices is called an \emph{$n$-simplex} of $X$.

	\end{enumerate}

\item A morphism of simplicial complexes $X \rightarrow Y$ is a function $f: X \rightarrow Y$ that satisfies the condition that, for each $x \in S_X$, the image $f[x] \in \mathcal{P}^{\text{fin}}_{\neq \emptyset} (V_Y)$ is an element of $S_Y$.

\item We denote the resulting category of simplicial complexes by $\scpx$.

	\begin{enumerate}[label=$\circ$, ref=$\circ$]

	\item Denote by $\sing: \scpx \rightarrow \sSet$ the functor defined on objects (and extended in the obvious way to morphisms) by
	\begin{equation*}
	\sing(X)_n = \left \{ (x_0, \dots, x_n) \in (V_X)^{n+1} \mid \{x_0, \dots, x_n\} \in S_X \right \} \text{ for each } n \in \mathbb{N}
	\end{equation*}
	where the action of the $i^{\text{th}}$ face (resp. degeneracy) maps of $\Dop$ are given by omitting (resp. repeating) the $i^{\text{th}}$ vertex.

	\end{enumerate}

\item Denote by $\scpx^{\Rop}$ the category of functors from $\Rop$ to $\scpx$. Given an honest metric space $X$, its \emph{Vietoris-Rips complex} is the $\Rop$-indexed simplicial complex $\vr(X) \in \scpx^{\Rop}$ where $\vr(X)(r)$ is the simplicial complex given by

	\begin{enumerate}

	\item The vertices of $\vr(X)(r)$ are the points of $X$, on which the action of morphisms $r \leq s$ is the identity;

	\item The $n$-simplices of $\vr(X)(r)$ are the sets $\{x_0, \dots, x_n\}$ of $n+1$ distinct points of $X$ which are at pairwise distance $\leq r$.

	\end{enumerate}

\item Denoting by $\Met$ the category of honest metric spaces with morphisms the $1$-Lipschitz functions (equivalently, the full subcategory of $\rgph$ on the strict symmetric $\ell^1$ metric spaces), the Vietoris-Rips complex is a functor $\vr: \Met \rightarrow \scpx^{\Rop}$.

\end{enumerate}

\end{definition}

The point of the somewhat lengthy recalling of definitions above is the precise statement of the following.

\begin{theorem}
\label{thm:vrcomplex}

\item

\begin{enumerate}

\item Let $\mcv$ be a frame and $X \in \vgph$. For each $r \in \mcv$, each $n$-simplex in $N_{\max}(X)(r)$ exactly corresponds to an $(n+1)$-tuple $(x_0, \dots, x_n)$ of vertices of $X$ such that for each $0 \leq i \leq j \leq n$, $X(x_i, x_j) \leq r$.\label{nmax}

\item If $\mcv = \mathbb{R}$, then $N_{\max}$ yields the Vietoris-Rips complex in the following distinct yet related senses:

	\begin{enumerate}

	\item There is a (non-functorial) assignment to each honest metric space $X$ an $\ell^1$ metric space $X^\prime$ such that for each $r \in \mathbb{R}$ the geometric realization of $\vr(X)(r)$ is (homeomorphic to) the geometric realization of $N_{\max} (X^\prime)(r)$.\label{vrnonfunc}

	\item The following diagram of functors commutes:
	\begin{equation*}
	\begin{tikzcd}[row sep = 4em, column sep = 4em]
	\Met
	\arrow[hook]{r}
	\arrow{d}[swap]{\vr}
		& \rgph
		\arrow{d}{N_{\max}} \\
	\scpx^{\Rop}
	\arrow{r}{\sing}
		& \sPSh(\mathbb{R})
	\end{tikzcd}
	\end{equation*}
	and moreover for each $X \in \Met$, and for each $r \in \mathbb{R}$, the geometric realization of $\vr(X)(r)$ is homotopy equivalent \cite{omar} to the geometric realization of $\sing(\vr(X))(r)$.\label{vrfunc}

	\end{enumerate}

\end{enumerate}

\end{theorem}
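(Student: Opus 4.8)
For part (\ref{nmax}), the plan is to specialize the formula for $N_\otimes(X)(r)_n$ from Proposition \ref{prop:presimp} to the case $\otimes = \max$. By that proposition, an $n$-simplex of $N_{\max}(X)(r)$ is an $(n+1)$-tuple $(x_0, \dots, x_n)$ for which there exist $r_1, \dots, r_n$ with $\max_i r_i \leq r$ and $X(x_i, x_j) \leq \max_{i+1 \leq k \leq j} r_k$ for all $i \leq j$. First I would observe that $\max_{i+1 \leq k \leq j} r_k \leq \max_i r_i \leq r$, so that the existence of such $r_k$ forces $X(x_i,x_j) \leq r$ for all $i \leq j$. Conversely, given a tuple with $X(x_i,x_j) \leq r$ for all $i \leq j$, I would simply take $r_k = r$ uniformly; then $\max_i r_i = r \leq r$ and $X(x_i,x_j) \leq r = \max_{i+1\leq k \leq j} r_k$, so the tuple lies in $N_{\max}(X)(r)_n$. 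This is a short unpacking of definitions. (Alternatively, this also follows from the nerve-realization description in Subsection 3.4, where $N(X)(r)_n = \vgph(\Delta^n_{\max}(r,\dots,r), X)$, and a map out of $\Delta^n_{\max}(r,\dots,r)$ is exactly such a tuple — but citing that here would be circular given how that subsection is written, so I would keep the direct argument.)

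For part (\ref{vrfunc}), I would first establish commutativity of the square. Chasing an honest metric space $X$ around the bottom-left: $\vr(X)(r)$ is the simplicial complex with vertex set $X$ whose simplices are sets of \emph{distinct} points at pairwise distance $\leq r$, and $\sing$ turns this into the simplicial set whose $n$-simplices are tuples $(x_0,\dots,x_n)$ with $\{x_0,\dots,x_n\} \in S_{\vr(X)(r)}$ — i.e. tuples of points (not necessarily distinct, since a set of size $< n+1$ is allowed) that are pairwise at distance $\leq r$. Chasing around the top-right: by part (\ref{nmax}), $N_{\max}(X)(r)_n$ is the set of tuples $(x_0,\dots,x_n)$ with $X(x_i,x_j) \leq r$ for all $i \leq j$; by symmetry of $X$ this is the same as pairwise distance $\leq r$. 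The key subtlety to address is the distinctness condition in the definition of simplicial complex versus its absence in the $\mcv$-graph nerve: a repeated tuple $(x_0,\dots,x_n)$ underlies the set $\{x_0,\dots,x_n\}$ of smaller cardinality, which is still a simplex of $\vr(X)(r)$ provided those distinct points are pairwise close — and they are, since the pairwise-distance condition on the tuple is preserved under passing to the underlying set. So the two sets of $n$-simplices coincide, and one checks face/degeneracy maps (omit/repeat a vertex) match on both sides, as does the action of $r \leq s$ and of $1$-Lipschitz maps $X \to Y$; all of this is routine.

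The homotopy-equivalence clause is where I would lean on the cited result: for each honest metric space $X$ and each $r$, the geometric realization of the simplicial complex $\vr(X)(r)$ is homotopy equivalent to the geometric realization of its associated simplicial set $\sing(\vr(X))(r)$. This is precisely the statement that a simplicial complex and the simplicial set obtained by taking all ordered tuples spanning its simplices have homotopy equivalent realizations — the content of the comparison in \cite{omar} — so I would invoke it directly rather than reprove it. For part (\ref{vrnonfunc}), the non-functorial assignment $X \mapsto X'$ is the point where some care is needed: given an honest (hence symmetric, strict, $\ell^1$) metric space $X$, I would produce an $\ell^1$ metric space $X'$ by, e.g., a choice that makes $N_{\max}(X')(r)$ carry the same $n$-simplices as $\vr(X)(r)$ up to the distinct-versus-nondistinct discrepancy and such that the realizations agree on the nose; the natural candidate is to keep $X' = X$ as an $\mcv$-graph, using that (\ref{vrfunc}) already gives a homotopy equivalence and then noting that on geometric realizations the extra degenerate/repeated simplices of $\sing$ collapse, yielding a homeomorphism rather than merely a homotopy equivalence for each fixed $r$ — the "non-functorial" caveat reflecting that this identification of realizations is not natural in $X$. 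I expect the main obstacle to be precisely this last point: making the passage from "homotopy equivalent for each $r$" (which is what \cite{omar} and functoriality give) to "homeomorphic realizations for each $r$ after a non-functorial modification" fully precise, i.e. pinning down what $X'$ is and why the realization identification, though not natural, holds strictly.
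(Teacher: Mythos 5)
Your treatments of parts (\ref{nmax}) and (\ref{vrfunc}) match the paper's: for (\ref{nmax}) both directions are exactly as in the paper (take $r_k = r$ uniformly for the converse), and for (\ref{vrfunc}) the identification $N_{\max}(X)(r)_n = \sing(\vr(X))(r)_n$ via symmetry of $X$ plus the citation of \cite{omar} for the homotopy equivalence is the same argument. (Your parenthetical worry about circularity with the nerve--realization subsection is well placed but moot, since the direct unpacking suffices.)

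The genuine gap is in part (\ref{vrnonfunc}), and you correctly sense it but then propose something that does not work. Setting $X' = X$ cannot give a \emph{homeomorphism} of realizations for each $r$: when $X$ is symmetric, every set $\{x_0,\dots,x_n\}$ of $n+1$ distinct pairwise-close points contributes $(n+1)!$ non-degenerate ordered $n$-simplices to $N_{\max}(X)(r)$, and these do not ``collapse'' under geometric realization --- what you get is precisely $|\sing(\vr(X))(r)|$, which is only homotopy equivalent to $|\vr(X)(r)|$ (this is exactly the content of (\ref{vrfunc})). The point of introducing a genuinely different $X'$ is to break this permutation redundancy. The paper's construction: choose a total order $\preceq$ on the points of $X$ (this arbitrary choice is the source of the non-functoriality) and set
\begin{equation*}
X^\prime(x,y) = \left\{ \begin{array}{cc} d_X(x,y) & \text{if } x \preceq y \\ \infty & \text{otherwise} \end{array} \right.
\end{equation*}
which one checks is still an $\ell^1$ metric space. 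Then by part (\ref{nmax}), a tuple $(x_0,\dots,x_n)$ lies in $N_{\max}(X')(r)_n$ (for $r < \infty$) iff it is $\preceq$-weakly-increasing and the underlying distinct points are pairwise at distance $\leq r$. Consequently each set of $n+1$ distinct points spanning a simplex of $\vr(X)(r)$ corresponds to exactly one non-degenerate $n$-simplex of $N_{\max}(X')(r)$ (its $\preceq$-ordered tuple), and any tuple with a repetition is $\preceq$-weakly-increasing only if the repeated entries are adjacent, hence degenerate. This gives an isomorphism of the non-degenerate simplices with the simplices of $\vr(X)(r)$ that is compatible with faces, yielding a homeomorphism of realizations on the nose, not merely a homotopy equivalence. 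Without this asymmetrization your proposal cannot be completed as stated.
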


\begin{proof}

(\ref{nmax}): We know from Proposition \ref{prop:presimp} that
\begin{equation*}
N_{\max}(X)(r)_n = \left \{
\begin{array}{c | c}
(x_0, \dots, x_n)
	& \begin{array}{c}
	\exists (r_1, \dots, r_n) \in \mcv^n \, \text{ such that } \max\limits_{1 \leq k \leq n} r_k \leq r, \text{ and } \\
	X(x_i, x_j) \leq \max\limits_{i+1 \leq k \leq j} r_k \, \text{ for each } 0 \leq i \leq j \leq n
	\end{array}
\end{array}\right \}
\end{equation*}
and our assertion is that this is the same as the set
\begin{equation*}
\left \{
\begin{array}{c | c}
(x_0, \dots, x_n)
	& X(x_i, x_j) \leq r \, \text{ for each } 0 \leq i \leq j \leq n
\end{array}\right \}.
\end{equation*}
Clearly the former is a subset of the latter, and the latter is a subset of the former since we may take $r_k = r$ for each $1 \leq k \leq n$.

(\ref{vrnonfunc}): This is a very slight adaptation of a well known construction, see e.g. \cite{omar}, \cite{jardine}. Given an honest metric space $X$ with metric $d_X$, let us put a total order $\preceq$ on the points of $X$. Let us define $X^\prime$ by declaring that its vertices are the points of $X$, and that
\begin{equation*}
X^\prime(x,y) = \left \{
\begin{array}{cc}
d_X(x,y)
	& \text{if } x \preceq y \\
\infty
	& \text{otherwise}
\end{array} \right .
\end{equation*}
The $n$-simplices of $\vr(X)(r)$ are the sets $\{x_0, \dots, x_n\}$ of distinct points of $X$ at pairwise distance $\leq r$. The faces of each such simplex are the $n$-element subsets.

On the other hand, given such a set $\{x_0, \dots, x_n\}$, by the result (\ref{nmax}) and the construction of $X^{\prime}$ there is exactly one $(n+1)$-tuple $(x_{i_0}, \dots, x_{i_n}) \in N_{\max}(X^{\prime})(r)_n$ with $k \mapsto i_k$ a permutation of the set $\{0, \dots, n\}$, namely the one where $x_{i_j} \preceq x_{i_k}$ iff $j \leq k$. Similarly given any subset of $n$ elements, there is exactly one $n$-tuple with those same elements contained in $N_{\max}(X^{\prime})(r)_{n-1}$, namely the one which omits the unique element not contained in the $n$-element subset; then one of the face maps takes $(x_{i_0}, \dots, x_{i_n})$ to this $(n-1)$-simplex. 

Thus for any \emph{set} $\{x_0, \dots, x_n\}$ of distinct points of $X^{\prime}$ there is at most a single $n$-simplex whose vertices consist of those points, in any order, in $N_{\max}(X^\prime)(r)$, for any $r \in \mathbb{R}$. Moreover $N_{\max}(X^{\prime})(r)$ contains such an $n$-simplex iff $\{x_0, \dots, x_n\}$ is an $n$-simplex of $\vr(X)(r)$. Finally, any $(x_0, \dots, x_n) \in N_{\max}(X^\prime)(r)_n$ which has at least one repeated element must be a degenerate simplex. These facts together make it clear that for each $r \in \mathbb{R}$ the geometric realizations of $\vr(X)(r)$ and $N_{\max}(X^\prime)(r)$ are the same.

(\ref{vrfunc}): Let $X$ be an honest (i.e. strict symmetric $\ell^1$) metric space. Then considering $X$ as an $\mathbb{R}$-graph, we have that $X(a,b) = X(b,a)$ for all $a,b \in X$. Thus for any given $r \in \mathbb{R}$ and set $\{x_0, \dots, x_n\}$ of points of $X$, $(x_0, \dots, x_n) \in N_{\max}(X)(r)_n$ iff $(x_{i_0}, \dots, x_{i_n}) \in N_{\max}(X)(r)_n$ for every permutation $k \mapsto i_k$ of $\{0, \dots, n\}$. From this and (\ref{nmax}) we conclude that $(x_0, \dots, x_n) \in N_{\max}(X)(r)_n$ iff the points in the set $\{x_0, \dots, x_n\}$ are at pairwise distance $\leq r$. However this last condition holds iff $\{x_0, \dots, x_n\} \in S_{\vr(X)(r)}$ by construction. Therefore
\begin{equation*}
N_{\max}(X)(r)_n = \{(x_0, \dots, x_n) \mid \{x_0, \dots, x_n\} \in S_{\vr(X)(r)}\} = \sing(\vr(X))(r)_n
\end{equation*}
with obvious agreement on the actions of morphisms of $\Rop$ (inclusion) and $\Dop$ (omission/repetition of vertices). This correspondence is also natural because it is specified completely in terms of tuples/sets of vertices, which is also the case for maps in $\Met$.

The fact that the geometric realizations of $\sing(\vr(X))(r)$ and $\vr(X)(r)$ are homotopy equivalent is a result of \cite{omar}.

\end{proof}

This is a central example for us, and as such we will have more to say about it later in this section and the next. For now let us remark that the construction given in (the proof of) Theorem \ref{thm:vrcomplex} is easier to work with since it is quite small; while applying $\sing$, being functorial, enjoys better categorical properties.

Recall that if $X \in \vgph$ is in fact a $(\mcv, \otimes)$-category, it satisfies the appropriate ``triangle inequality'' in the sense of $(\mcv, \otimes)$. In this case $N_{\otimes}$ and its description supplied by Proposition \ref{prop:presimp} simplifies to yield a construction that is mentioned in \cite{otter} under the name \emph{enriched nerve}:

\begin{corollary}
\label{cor:vcatnerve}

Let $X \in \vgph$. If in fact $X \in \vocat \subseteq \vgph$, then the description of each $N_{\otimes}(X)(r)_n$ simplifies further; it is the set of $(n+1)$-tuples $(x_0, \dots, x_n)$ of vertices of $X$ such that $\bigotimes\limits_{1 \leq i \leq n} X(x_{i-1}, x_i) \leq r$.

\end{corollary}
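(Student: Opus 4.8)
The plan is to compare the description of $N_{\otimes}(X)(r)_n$ given by Proposition~\ref{prop:presimp} with the claimed simpler one, proving equality of the two sets by mutual inclusion. The key point is that the existential quantifier over tuples $(r_1,\dots,r_n)$ appearing in Proposition~\ref{prop:presimp} is, once $X$ satisfies the triangle inequality of Definition~\ref{def:vocat}, always witnessed by the canonical choice $r_k = X(x_{k-1},x_k)$.

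First I would record the easy inclusion, which in fact does not use the hypothesis $X \in \vocat$. If $(x_0,\dots,x_n)$ lies in $N_{\otimes}(X)(r)_n$, pick a witnessing tuple $(r_1,\dots,r_n)$ with $\bigotimes_{1\le i\le n} r_i \le r$ and $X(x_i,x_j) \le \bigotimes_{i+1\le k\le j} r_k$ for all $0\le i\le j\le n$. Specializing the latter to $i = k-1$, $j = k$ gives $X(x_{k-1},x_k) \le r_k$ for each $k$. Since $\otimes\colon\mcv\times\mcv\to\mcv$ is a functor it is monotone in each variable, so, working throughout with the order $\le$ of Remark~\ref{rmk:ordering}, $\bigotimes_{1\le k\le n} X(x_{k-1},x_k) \le \bigotimes_{1\le k\le n} r_k \le r$, which places $(x_0,\dots,x_n)$ in the claimed set.

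For the reverse inclusion I would use that $X$ is a $(\mcv,\otimes)$-category. Given $(x_0,\dots,x_n)$ with $\bigotimes_{1\le i\le n} X(x_{i-1},x_i) \le r$, set $r_k := X(x_{k-1},x_k)$; then $\bigotimes_k r_k \le r$ by hypothesis, and it remains to verify $X(x_i,x_j) \le \bigotimes_{i+1\le k\le j} X(x_{k-1},x_k)$ for all $0\le i\le j\le n$. When $i = j$ both sides are $0$, since the right-hand side is the empty $\otimes$-product, i.e.\ the monoidal unit, which by the semicartesian hypothesis is the terminal object $0$, while $X(x_i,x_i)=0$ by definition of a $\mcv$-graph. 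For $j > i$ one inducts on $j - i$: the case $j = i+1$ is trivial, and for $j > i+1$ the triangle inequality gives $X(x_i,x_j) \le X(x_i,x_{j-1}) \otimes X(x_{j-1},x_j)$, into which we feed the inductive hypothesis for $X(x_i,x_{j-1})$ together with monotonicity of $\otimes$ to conclude.

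The argument is routine and I do not expect any genuine obstacle. The only thing requiring care is bookkeeping: all inequalities are taken with respect to the order on $\Vop$, so that ``$\otimes$ is monotone'' and ``the monoidal unit is $0$'' are invoked with the correct variance; and one should keep in mind that this simplification genuinely fails for $X \notin \vocat$, which is precisely where the iterated triangle inequality used in the second inclusion can break down.
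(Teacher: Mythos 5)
Your proof is correct and is exactly the routine verification the paper leaves implicit by labeling this a corollary of Proposition~\ref{prop:presimp}. Both inclusions are handled as intended: the forward direction uses only monotonicity of $\otimes$ (correctly noted as not requiring $X\in\vocat$), and the reverse uses the canonical witness $r_k = X(x_{k-1},x_k)$ together with the iterated triangle inequality, with the $i=j$ case discharged via the semicartesian hypothesis.
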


On the other hand, we emphasize that the nerve $N_{\otimes}$ is well-defined on all of $\vgph$, so that if $\mcv$ is a frame that supports distinct quantale structures $\otimes_1$ and $\otimes_2$, then $N_{\otimes_1}, N_{\otimes_2}: \vgph \rightarrow \sPSh(\mcv)$ are distinct nerve functors arising from the same general paradigm which, as we will show, may be fruitfully compared to each other.

Indeed, the key observation is that when $\mcv = \mathbb{R}$, then the nerve functors $N_{\max}$ and $N_+$ corresponding to the two different quantale structures ``$\max$'' and ``$+$'' on $\mathbb{R}$ respectively yield the Vietoris-Rips complex (by Theorem \ref{thm:vrcomplex}) and the enriched nerve used for magnitude homology e.g. in \cite{lsmaghom} and \cite{otter}. Just as the Vietoris-Rips complex is the restriction of $N_{\max}$ to the category of honest metric spaces, the enriched nerve in the context of \cite{otter} is the restriction of $N_+$ (more generally, of $N_{\otimes}$) to $\ell^1$ metric spaces (more generally, to $(\mcv, \otimes)$-categories).

In the interest of keeping the following material as clear as possible, as well as maintaining consistency with Definition \ref{def:mets}, let us use the term \emph{$\ell^p$ nerve} to refer to the nerve $N_{+_p}$ corresponding to the choice $(\mcv, \otimes) = (\mathbb{R}, +_p)$.

Therefore in this terminology, the Vietoris-Rips complex and the ``enriched nerve'' of magnitude homology are the restrictions of the $\ell^{\infty}$ nerve and $\ell^1$ nerve to $\ell^1$ metric spaces. We should also mention that there are good practical reasons for such restrictions, as the honest metric spaces tend to live in the sweet spot of having many examples relevant to applications while still being convenient/tractable to prove results about. The point is that moving to the more general setting allows us to see how the Vietoris-Rips complex and the ``enriched nerve'' arise from the same construction by changing the value of a parameter: they are both $\ell^p$ nerves, just for different values of $p$.

\section{Persistent and magnitude homology}
\label{sec:persmaghom}

We have seen how the Vietoris-Rips complex and the $\ell^1$ nerve are examples of the same construction ($\ell^p$ nerve). Even with this in mind, however, the ways they are further processed in order to yield the final products of persistent/magnitude homology are technically quite different. We will see that in this case also the difference is in some sense the tuning of a parameter (independent of the choice of $p$). We start by briefly recalling these different pipelines for processing the respective nerves.

\subsection{$\mathbb{R}$-indexed chain complexes}
\label{subsec:vrpipeline}

We will rely on established machinery for which a standard reference is \cite{gj}.

The Dold-Kan correspondence gives us an equivalence of categories
\begin{equation*}
\begin{tikzcd}
\sab
\arrow[shift left=0.35em]{r}{K}[swap]{\simeq}
	& \Ch
	\arrow[shift left=0.35em]{l}{D}
\end{tikzcd}
\end{equation*}
between the category $\sab$ of simplicial abelian groups and the category $\Ch$ of chain complexes concentrated in nonnegative degree, where $K$ is the normalized Moore complex functor and $D$ takes a chain complex and returns in some sense the ``free'' simplicial group on that complex.

We recall that given a simplicial abelian group $A \in \sab$, its normalized Moore complex $KA$ is given as follows:
\begin{enumerate}

\item $KA_n$ is $A([n])/D_n$, where $A([n])$ is the value of $A$ on $[n] \in \Delta$ and $D_n \subseteq A([n])$ is the subgroup of $A([n])$ generated by the degenerate elements.

\item The map $\partial: KA_n \rightarrow KA_{n-1}$ is the one induced by the alternating face map $\partial: A([n]) \rightarrow A([n-1])$ given by
\begin{equation*}
\sum\limits^n_{i=0} (-1)^{i} d_i
\end{equation*}
where $d_i: A([n]) \rightarrow A([n-1])$ is the $i^{\text{th}}$ face map given by the data of $A$ and the sum uses the addition provided by $A([n-1])$.

\end{enumerate}

That all of this is well-defined and works as intended is shown in e.g. \cite{gj}.

Noting that an equivalence of categories induces via pushforward an equivalence of the appropriate functor categories, for each quantale $(\mcv, \otimes)$ we have the composite
\begin{equation*}
\begin{tikzcd}
\vgph
\arrow{r}{N_{\otimes}}
	&\sPSh(\mcv)
	\arrow{r}{F_{\ast}}
		& \sabv
		\arrow[shift left=0.35em]{r}{K_{\ast}}[swap]{\simeq}
			& \Ch^{\Vop}
			\arrow[shift left=0.35em]{l}{D_{\ast}}
			\arrow{r}{H_{\bullet}}
				& \Ab^{\Vop}
\end{tikzcd}
\end{equation*}
where
\begin{enumerate}

\item $F: \sSet \rightarrow \sab$ the free abelian group functor;

\item $\sabv$ is the category of functors from $\Vop$ into $\sab$ (equivalently, functors from $(\Vop \times \Dop)$ into $\Ab$);

\item $\Ch^{\Vop}$ is the category of functors from $\Vop$ into $\Ch$;

\item $H_{\bullet}$ is the homology functor (applied pointwise on $\Vop$).

\end{enumerate}

Given a simplicial set $S$ we know that $H_{\bullet} K F (S)$ is the homology of $S$, so $H_{\bullet} K_{\ast} F_{\ast} N_{\otimes}$ simply takes the pointwise (on $\Vop$) homology of the nerve $N_{\otimes}(X)$ for any $X \in \sPSh(\mcv)$.

For future convenience, we establish the following terminology:

\begin{definition}
\label{def:lphom}

Let $(\mcv, \otimes)$ be given.

\begin{enumerate}

\item In the notation of the preceding, let us write $F_{\ast} N_{\otimes}$ as $\oN_{\otimes}$.

	\begin{enumerate}[label=$\circ$, ref=$\circ$]

	\item When $\mcv = \mathbb{R}$ so that for each $1 \leq p \leq \infty$ we have the $\ell^p$ nerve $N_{+_p}$, let us also refer to $\oN_{+_p}$ as the \emph{$\ell^p$ complex}.

	\end{enumerate}

\item Taking the equivalence of categories
	\begin{tikzcd}
	 \sabv
		\arrow[shift left=0.35em]{r}{K_{\ast}}[swap]{\simeq}
			& \Ch^{\Vop}
			\arrow[shift left=0.35em]{l}{D_{\ast}}
	\end{tikzcd} as established, we henceforth suppress mention of $K_{\ast}$ and $D_{\ast}$, and write $H_{\bullet}: \sab^{\Vop} \rightarrow \Ab^{\Vop}$ to actually mean the composite
	\begin{equation*}
	\sab^{\Vop} \xrightarrow{K_{\ast}} \Ch^{\Vop} \xrightarrow{H_{\bullet}} \Ab^{\Vop}.
	\end{equation*}

\item We may refer to $H_{\bullet}\oN_{\otimes}$ as \emph{$\otimes$-homology}.

	\begin{enumerate}[label=$\circ$,ref=$\circ$]

	\item When $\mcv = \mathbb{R}$, let us also refer to $+_p$-homology as \emph{$\ell^p$ homology}.

	\end{enumerate}

\end{enumerate}

\end{definition}

Since the $\ell^{\infty}$ nerve $N_{\max}$ is just the Vietoris-Rips complex in the sense made precise by Theorem \ref{thm:vrcomplex}, and since persistent homology is the pointwise-on-$\Rop$ homology of the Vietoris-Rips complex, we may say that \emph{persistent homology is $\ell^{\infty}$ homology} (cf. \cite{spivakfuzzy} for a related categorical perspective on persistent homology).

\subsection{Localizing the $\ell^p$ complex}
\label{subsec:loc}

Whereas persistent homology is the (pointwise) homology of the $\ell^\infty$ complex, we will see in this section that magnitude homology is the (pointwise) homology of an appropriately ``localized'' $\ell^1$ complex. Thus persistent homology and magnitude homology differ in two respects, namely the localization and the value of $p$ in the $\ell^p$ complex involved. The latter difference seems to have been overlooked in \cite{otter} when it is stated there that ``persistent homology is blurred magnitude homology''; we will account for this difference of ``blurring'' in this section as a kind of localization, but the point is that this ``blurring'' is only half of the difference between persistent and magnitude homology.

\begin{remark}
\label{rmk:otter}

The statement of \cite{otter} is still true if one accepts their characterization of persistent homology as homology of the $\ell^1$ complex, but the term ``persistent homology'', at least as applied to metric spaces, refers to homology of the $\ell^\infty$ (i.e. Vietoris-Rips) complex in the vast majority, if not all, of its occurrences in the literature (of which \cite{ghrist}, \cite{silvaghrist}, \cite{zcpershom} are a few well-known examples), and this is the sense of ``persistent homology'' to which we adhere in this paper.

On the other hand, we will commit our own etymological crime in this paper: our usage of the terms ``local'' and ``localization'' in what follows does \emph{not} refer to their usual notions, but rather a kind of localization \emph{along $\Rop$} for which we were unable to conjure satisfactory alternative terminology.

\end{remark}

We recall the pipeline of magnitude homology briefly. Whereas establishing its deeper theoretical properties is rather involved and is done in \cite{lsmaghom}, the actual explicit description of the objects is quite simple (and is again found e.g. in \cite{lsmaghom}):

\begin{definition}
\label{def:maghom}

Let $X$ be an $\ell^1$ metric space.

\begin{enumerate}

\item The \emph{magnitude nerve of $X$}, which we denote by $B(X) \in \sab^{\Rop}$, is the $\Rop$-indexed simplicial abelian group for which $B(X)(r)_n$ is the free abelian group on the set
\begin{equation*}
\left \{ (x_0, \dots, x_n) \mid \sum\limits_{1 \leq i \leq n} X(x_{i-1}, x_i) = r \right \}
\end{equation*}
with the evident action of face and degeneracy maps (any face whose lengths do not sum to $r$ is trivial in $B(X)(r)_{n-1}$). If $r < s$ is any nonidentity morphism of $\Rop$, then $B(X)(r) \rightarrow B(X)(s)$ is the zero map.

\item The \emph{magnitude homology of $X$} is the pointwise homology of $B(X)$.

\end{enumerate}

\end{definition}

Technically in \cite{lsmaghom}, \cite{otter} they regard $B(X)$ as being \emph{graded} on $\Rop$ instead of being a \emph{functor} on $\Rop$. It matters little in the end, since the action of every nonidentity morphism $r < s$ of $\Rop$ on $B(X)(r)$ is zero; the same will therefore hold true when we apply homology. A small advantage of our perspective is that we are able to directly compare $B(X)$ to the $\ell^1$ complex $\oN_{+}$.

Let $X$ be an $\ell^1$ metric space, so that by Corollary \ref{cor:vcatnerve} we have for each $r \in \Rop$ that
\begin{equation*}
N_+(X)(r)_n = \left \{ (x_0, \dots, x_n) \mid \sum\limits_{1 \leq i \leq n} X(x_{i-1}, x_i) \leq r \right \}
\end{equation*}
so that $\oN_+(X)(r)_n$ is the free abelian group on this set, with the action of the maps of $\Dop$ on $\oN_+(X)(r)$ induced by that on $N_+(X)(r)$. Clearly for every $r \leq s$ in $\Rop$ the action on $\oN_+(X)(r)_n$ is subgroup inclusion into $\oN_+(X)(s)_n$.

Note that for each $s \leq r$ the group $\oN_+ (X)(s)_n$ sits inside of $\oN_+ (X)(r)_n$ in the obvious way. Clearly we have that
\begin{equation*}
\oN_+(X)(r)_n \, / \, \bigcup\limits_{s < r} \oN_+(X) (s)_n \cong B(X)(r)_n. 
\end{equation*}

Since the actions of $\Rop$ and $\Dop$ on $\oN_+(X)$ commute (because e.g. $\oN_+(X)$ is equivalently a functor $(\Rop \times \Dop) \rightarrow \Ab$) we must in fact have 
\begin{equation*}
\oN_+(X)(r) \, / \, \bigcup\limits_{s < r} \oN_+(X) (s) \cong B(X)(r). 
\end{equation*}

\begin{remark}
\label{rmk:hepwil}

The above perspective is essentially the one mentioned in passing in Remark 44 of \cite{hepwil}, as part of a section detailing a simplicial perspective on the constructions in magnitude homology - they use the simplicial approach in their paper in order to prove the K\"{u}nneth theorem for magnitude homology.

\end{remark}

This suggests the following. To each $r \in \Rop$ assign some sieve $J_r$ (i.e. a downward closed subposet of $\Rop$) on $r$ in such a way that whenever $r \leq s$ we have $J_r \subseteq J_s$; this gives the data of a functor $J: \Rop \rightarrow \mathcal{S}(\Rop)$, where $\mathcal{S}(\Rop)$ is the poset of downward closed subposets of $\mathbb{R}$, ordered by inclusion. Given any $\Rop$-indexed simplicial abelian group $A \in \sab^{\Rop}$, we define another such $\loc_J(A) \in \sab^{\Rop}$ as
\begin{equation*}
\loc_J (A) (r) = A(r) \, / \, A(J_r)
\end{equation*}
where $A(J_r) \subseteq A(r)$ is the union of the images of the maps $A(s) \rightarrow A(r)$ for all $s \in J_r$. There is the minor issue of checking that this is well-defined, namely that
\begin{enumerate}

\item $A(J_r)$ is in fact a (simplicial) subgroup of $A(r)$, and that

\item The image of $A(J_r)$ under $A(r \leq s)$ is contained in $A(J_s)$ for $r \leq s$.

\end{enumerate}
If $a_1$ is in the image of $A(s_1)_n \rightarrow A(r)_n$ and $a_2$ is in the image of $A(s_2)_n \rightarrow A(r)_n$ for $s_1, s_2 \in J_r$ then $a_1 + a_2$ is certainly in the image of $A( \max(s_1, s_2) )_n \rightarrow A(r)_n$, verifying the first part. The second part is immediate by construction of $J$. Note that while this argument implicitly uses the total order on $\mathbb{R}$, we may repeat it for more general choices of frame $\mcv$ in place of $\mathbb{R}$ if we additionally require that for each $r \in \mcv$, the sieve $J_r$ is closed under taking $\max$, that is, $\max: (\Vop \times \Vop) \rightarrow \Vop$ restricts to $\max: (J_r \times J_r) \rightarrow J_r$ (this condition is satisfied automatically if $\mcv$ is totally ordered).

Let us call $\loc_J(A)$ the \emph{$J$-localization of $A$}. We say that $A \in \sab^{\Rop}$ is \emph{$J$-local} when $A \cong \loc_J(A)$, equivalently when for each $r \in \Rop$ and for each $s \in J_r$ the map $A(s) \rightarrow A(r)$ is the zero map. Then for each $J$ as above we get an adjunction
\begin{tikzcd}
\sab^{\Rop}
\arrow[shift left = 0.5em]{r}[name=L]{\loc_J}
	& \sab^{\Rop}_J
	\arrow[shift left = 0.5em]{l}[name=i]{\iota_J}
	\arrow[from=L, to=i, phantom]{}[rotate=90]{\vdash}
\end{tikzcd}
where $\sab^{\Rop}_J$ is the full subcategory of $\sab^{\Rop}$ on the $J$-local objects and $\iota_J$ is the inclusion; the requisite universal property is straightforward to check, or one may simply note that the property of being $J$-local must be preserved under taking limits since limits are taken pointwise.

\begin{remark}

In fact the same argument with colimits shows that $\iota_J$ is also a \emph{left} adjoint.

\end{remark}
If for each $r \in \Rop$ we let $J_r$ be the maximal nontrivial sieve, i.e. the set of all $s < r$, then we immediately see that $B(X) = \loc_J ( \oN_+(X))$. That is, the magnitude nerve is just the $J$-localization of the $\ell^1$ complex. We may think of this as the maximal localization we may perform that still fully preserves the metric data of $X$. In effect, this is the difference of ``blurring'' that is referred to in \cite{otter}. We may summarize the results of our discussion so far as follows:

\begin{theorem}
\label{thm:persvsmag}

Persistent homology and magnitude homology of metric spaces are the same construction with different (in fact opposite) choices of two parameters. More precisely, persistent homology is the homology of the \underline{un}localized $\ell^{\underline{\infty}}$ complex whereas magnitude homology of $X$ is the homology of the \underline{maximally} localized $\ell^{\underline{1}}$ complex.

\end{theorem}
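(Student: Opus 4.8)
The plan is to recognize that Theorem \ref{thm:persvsmag} repackages facts already established, so the proof consists of assembling them and stating the comparison precisely. First I would make explicit the single construction being varied: for any $1 \leq p \leq \infty$ and any admissible $J \colon \Rop \to \mathcal{S}(\Rop)$ as in Section \ref{subsec:loc}, we have the composite endofunctor $H_\bullet \loc_J \oN_{+_p} \colon \vgph \to \Ab^{\Rop}$, where $\oN_{+_p} = F_\ast N_{+_p}$ is the $\ell^p$ complex of Definition \ref{def:lphom}. The two parameters are $p$ and $J$, and the claim to be checked is: persistent homology, as a functor on $\Met \subseteq \vgph$, is this construction for $p = \infty$ and $J = J^{\min}$ the everywhere-trivial sieve $J^{\min}_r = \emptyset$; magnitude homology, as a functor on $\ell^1$ metric spaces (regarded inside $\vgph$), is this construction for $p = 1$ and $J = J^{\max}$ the everywhere-maximal nontrivial sieve $J^{\max}_r = \{ s \in \Rop \mid s < r \}$.

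For the persistent half: since $\loc_{J^{\min}}(A)(r) = A(r)/A(\emptyset) = A(r)$, the construction reduces to $H_\bullet \oN_{\max}$, i.e.\ $\ell^\infty$ homology in the sense of Definition \ref{def:lphom}. By Theorem \ref{thm:vrcomplex}(\ref{vrfunc}) there is, for each honest metric space $X$ and each $r$, an identification $N_{\max}(X)(r)_n = \sing(\vr(X))(r)_n$ natural in $X$ and compatible with face/degeneracy maps and the $\Rop$-action, together with a homotopy equivalence $|\sing(\vr(X))(r)| \simeq |\vr(X)(r)|$. Since $H_\bullet F_\ast$ of a simplicial set computes the singular homology of its geometric realization and homology is homotopy-invariant, we obtain $H_\bullet \oN_{\max}(X)(r) \cong H_\bullet(\vr(X)(r))$, naturally in $X$ and compatibly with the maps $r \leq s$; the right-hand side is by definition the persistent homology of $X$. (One could instead use the non-functorial, homeomorphism-level identification of Theorem \ref{thm:vrcomplex}(\ref{vrnonfunc}).)

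For the magnitude half: take $J = J^{\max}$ and recall from Section \ref{subsec:loc} the natural isomorphism of $\Rop$-indexed simplicial abelian groups $\oN_+(X)(r)/\bigcup_{s < r}\oN_+(X)(s) \cong B(X)(r)$, valid for every $\ell^1$ metric space $X$; since the transition maps of $\oN_+(X)$ are subgroup inclusions, this is exactly $\loc_{J^{\max}}(\oN_+(X)) \cong B(X)$ by the definition of $\loc_J$. Applying $H_\bullet$ pointwise and invoking Definition \ref{def:maghom}, which defines magnitude homology as the pointwise homology of $B(X)$, finishes this half. It then remains to note that $J^{\min}$ and $J^{\max}$ are the two extremes among admissible $J$ (any $J$ with $r \in J_r$ would force $\loc_J(A)(r) = 0$), so the two parameter choices are opposite in both coordinates, as the theorem asserts.

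I do not anticipate a real obstacle: the one step that might look like it needs argument — comparing the homology of the ordered-simplex simplicial set $\sing(\vr(X))(r)$ with the ordinary simplicial homology of the complex $\vr(X)(r)$ — is subsumed by the geometric-realization homotopy equivalence of Theorem \ref{thm:vrcomplex}(\ref{vrfunc}), a classical fact cited there to \cite{omar}, so nothing new is required. Everything else is naturality bookkeeping, which is automatic because every identification above is described purely in terms of tuples of vertices.
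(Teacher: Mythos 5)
Your proof assembles exactly the same two ingredients the paper uses to establish this theorem (which the paper presents as a summary of the preceding discussion rather than giving a freestanding proof): the identification of $N_{\max}$ with the Vietoris--Rips complex via Theorem~\ref{thm:vrcomplex}(\ref{vrfunc}), and the observation from Section~\ref{subsec:loc} that $B(X) = \loc_J(\oN_+(X))$ for $J_r$ the maximal nontrivial sieve. The only addition is your tidy formalization of ``unlocalized'' as $J^{\min}_r = \emptyset$ together with the remark that $J^{\min}$ and $J^{\max}$ are the two extremes among admissible $J$, which makes the ``opposite choices of two parameters'' phrasing explicit but does not change the substance.
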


Recalling that $\ell^p$ homology refers to homology of the $\ell^p$ complex, we may refer to it also by \emph{global $\ell^p$ homology} when we wish to emphasize that the $\ell^p$ complex has not been localized. Let us use the term \emph{local $\ell^p$ homology} to refer to homology of the maximally localized (i.e. $J$-localized, for each $J_r$ the maximal nontrivial sieve) $\ell^p$ complex.

We might then concisely say that persistent homology is global $\ell^\infty$ homology whereas magnitude homology is local $\ell^1$ homology. But even this statement does not fully capture the relationship between persistent homology and magnitude homology, on which we proceed to elaborate.

Let $\sPSh_m(\mathbb{R})$ denote the full subcategory of $\sPSh(\mathbb{R})$ on the objects $X$ for which each map $X(r) \rightarrow X(s)$ is a monomorphism (see e.g. \cite{jardine} for a discussion on the significance of such a subcategory), and similarly let $\sab^{\Rop}_m$ denote the full subcategory of $\sab^{\Rop}$ on the objects $A$ for which each map $A(r) \rightarrow A(s)$ is a monomorphism. Clearly the free-forgetful adjunction between $\sPSh(\mathbb{R})$ and $\sab^{\Rop}$ restricts to one between $\sPSh_m(\mathbb{R})$ and $\sab^{\Rop}_m$. Furthermore, for any $(\mcv, \otimes)$ the image of the corresponding nerve $N_{\otimes}$ is contained in $\sPSh_m(\mathbb{R})$ since for any $X \in \vgph$ and $r \leq s$ in $\Rop$, $N_{\otimes}(X)(r) \subseteq N_{\otimes}(X)(s)$.

In particular, for any $1 \leq p \leq \infty$ the $\ell^p$ complex of any $\mathbb{R}$-graph $X$ lives in $\sab^{\Rop}_m$ so that we can think of $\oN_{+_p}$ as a functor from $\rgph$ into $\sab^{\Rop}_m$. On the other hand, as we just saw we have that $\loc_J$ lands in $\sab^{\Rop}_J$. Thus we may regard global $\ell^p$ homology as the composition
\begin{equation*}
\begin{tikzcd}[column sep = 3em]
\rgph
\arrow{r}{N_{+_p}}
	& \sab^{\Rop}_m
	\arrow[hook]{r}{\iota_m}
		& \sab^{\Rop}
		\arrow{r}{H_{\bullet}}
			& \Ab^{\Rop}
\end{tikzcd}
\end{equation*}
and local $\ell^p$ homology as the composition
\begin{equation*}
\begin{tikzcd}[column sep = 4em]
\rgph
\arrow{r}{N_{+_p}}
	& \sab^{\Rop}_m
	\arrow{r}{\loc_J \circ \iota_m}
		&\sab^{\Rop}_J
		\arrow[hook]{r}{\iota_J}
			& \sab^{\Rop}
			\arrow{r}{H_{\bullet}}
				& \Ab^{\Rop}
\end{tikzcd}
\end{equation*}
where $\iota_m: \sab^{\Rop}_m \hookrightarrow \sab^{\Rop}$ is the inclusion.

Now by Lemma \ref{lem:nattrans} we have a natural inclusion $N_+ \hookrightarrow N_{\max}$, and by the discussion above we have a natural transformation 
\begin{tikzcd}[column sep = 6em]
\sab^{\Rop}_m
\arrow[shift left = 0.7em]{r}[name=m]{\iota_m}
\arrow[shift right = 0.7em]{r}[name=j, swap]{\iota_J \circ \loc_J \circ \iota_m}
	& \sab^{\Rop}
	\arrow[from=m, to=j, phantom]{}[rotate=90]{\Leftarrow}
\end{tikzcd}
given by the components of the unit of the adjunction
\begin{tikzcd}
\sab^{\Rop}
\arrow[shift left = 0.5em]{r}[name=L]{\loc_J}
	& \sab^{\Rop}_J
	\arrow[shift left = 0.5em]{l}[name=i]{\iota_J}
	\arrow[from=L, to=i, phantom]{}[rotate=90]{\vdash}
\end{tikzcd}
restricted to $\sab^{\Rop}_m \subseteq \sab^{\Rop}$. This gives the following deeper look at the relationship between persistent and magnitude homology.

Denote by $\eta: N_+ \rightarrow N_{\max}$ the natural transformation given by Lemma \ref{lem:nattrans} where $(\mcv, \otimes) = (\mathbb{R},+)$, i.e. the $2$-cell in
\begin{equation*}
\begin{tikzcd}[column sep = 8em]
\rgph
\arrow[bend left]{r}[name=m]{N_{\max}}
\arrow[bend right]{r}[name=j, swap]{N_+}
	& \sab^{\Rop}_m
	\arrow[from=m, to=j, phantom, shift right]{}[rotate=270]{\Longleftarrow}
	\arrow[from=m, to=j, phantom, shift left = 0.5em]{}{\eta}
\end{tikzcd}
\end{equation*}

Denote by $\varphi: \iota_m \rightarrow (\iota_J \circ \loc_J \circ \iota_m)$ the aforementioned natural transformation, i.e. the $2$-cell in
\begin{equation*}
\begin{tikzcd}[column sep = 3em, row sep = 1em]
\sab^{\Rop}_m
\arrow[bend left]{rr}[name=m]{\iota_m}
\arrow[bend right = 15]{dr}[swap]{\loc_J \circ \iota_m}
	&
		& \sab^{\Rop} \\
	& \sab^{\Rop}_J
	\arrow[bend right = 15]{ur}[swap]{\iota_j}
	\arrow[to=m, phantom, shift left]{}[rotate=90, pos = 0.40]{\Longleftarrow}
	\arrow[to=m, phantom, shift right = 0.5em, pos = 0.40]{}{\varphi}
		&
\end{tikzcd}
\end{equation*}
Then we have the following diagram
\begin{equation*}
\begin{tikzcd}[column sep = 4em, row sep = 1em]
\rgph
\arrow[bend left]{rr}[name=M]{\oN_{\max}}
\arrow[bend right]{rr}[name= P, swap]{\oN_+}
\arrow[from=M, to=P, phantom, shift right]{}[rotate=270]{\Longleftarrow}
	\arrow[from=M, to=P, phantom, shift left = 0.5em]{}{\eta}
	&
	& \sab^{\Rop}_m
	\arrow[bend left]{rr}[name=i]{\iota_m}
	\arrow[bend right =10]{dr}[swap]{\loc_J \circ \iota_m}
		&
			& \sab^{\Rop}
			\arrow{r}{H_{\bullet}}
				& \Ab^{\Rop} \\
	&
	&
		& \sab^{\Rop}_J
		\arrow[bend right = 10]{ur}[swap]{\iota_J}
		\arrow[to=i, phantom, shift left]{}[rotate=90, pos = 0.40]{\Longleftarrow}
		\arrow[to=i, phantom, shift right = 0.5em, pos = 0.40]{}{\varphi}
			&
				&
\end{tikzcd}
\end{equation*}
where the composition across the top is persistent homology and the composition across the bottom is magnitude homology. Alternatively, we can rearrange the important part (the left two-thirds) of the above diagram as
\begin{equation*}
\begin{tikzcd}[column sep = 8em, row sep = 4em]
\rgph
\arrow{r}[name = M]{\overline{N}_{+}}
\arrow[equal]{d}
	& \sab^{\Rop}_m
	\arrow{r}[name = i]{\iota_m}
	\arrow[equal]{d}
		& \sab^{\Rop}\
		\arrow[equal]{d}\\
\rgph
\arrow{r}[name = P]{\overline{N}_{\max}}
\arrow[from=M, to=P, phantom, shift right, pos = 0.55]{}[rotate=90]{\Longleftarrow}
\arrow[from=M, to=P, phantom, shift left = 0.5em, pos = 0.55]{}{\eta}
	& \sab^{\Rop}_m
	\arrow{r}[name = j]{\iota_J \circ \loc_J \circ \iota_m}
	\arrow[from=i, to=j, phantom, shift right, pos = 0.55]{}[rotate=90]{\Longleftarrow}
	\arrow[from=i, to=j, phantom, shift left = 0.5em, pos = 0.55]{}{\varphi}
		& \sab^{\Rop}
\end{tikzcd}
\end{equation*}
which makes clear the following refinement of Theorem \ref{thm:persvsmag}:

\begin{thmbis}{thm:persvsmag}
\label{thm:persvsmagprime}

Let $\eta$ and $\varphi$ as in the preceding, i.e. $\eta$ includes each $\ell^1$ complex into an $\ell^{\infty}$ complex and $\varphi$ is $J$-localization.

Then we have the following relationships:
\begin{equation*}
\begin{tikzcd}[row sep = 4em]
	& \textup{Global $\ell^1$ homology}
	\arrow[Rightarrow]{dl}[swap]{\eta}
	\arrow[Rightarrow]{dr}{\varphi}
		& \\
\textup{Persistent homology}
\arrow[Rightarrow]{dr}[swap]{\varphi}
	&
		& \textup{Magnitude homology}
		\arrow[Rightarrow]{dl}{\eta} \\
	& \textup{Local $\ell^{\infty}$ homology}
		&
\end{tikzcd}
\end{equation*}

\end{thmbis}

\section{Observations from the quantalic perspective}
\label{sec:apps}

 In this section we make some further observations naturally afforded us by the idea of considering different quantale structures $(\mcv, \otimes)$ in our framework.

\subsection{Ultrametric spaces}

An \emph{ultrametric space} is a special kind of (honest) metric space occurring in certain contexts, such as phylogenetic trees: it is an honest metric space $(X,d)$ which satisfies the stronger triangle inequality
\begin{equation*}
d(a,c) \leq \max \left ( d(a,b), d(b,c) \right ) \quad \text{for all} \; a,b,c \in X
\end{equation*}
Thus an ultrametric space is precisely a strict symmetric $\ell^\infty$ metric space, in our terminology. Then the next fact shows that persistent homology of an honest metric space is an indicator of its failure to be an ultrametric space.

\begin{proposition}
\label{prop:persult}

Let $X$ be an honest metric space. If $X$ is an ultrametric space, its persistent homology is trivial at all scales, in all dimensions $\geq 1$. That is, we have
\begin{equation*}
H_n \overline{N}_{\max} (X) (r) = 0
\end{equation*}
for all $r \in \Rop$ and $n \geq 1$.

\end{proposition}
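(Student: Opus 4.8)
The plan is to work at a fixed scale $r \in \mathbb{R}$ and analyze the simplicial set $N_{\max}(X)(r)$ by hand, using the explicit description from Theorem \ref{thm:vrcomplex} (\ref{nmax}): its set of $n$-simplices is the set of $(n+1)$-tuples $(x_0, \dots, x_n)$ of points of $X$ with $X(x_i, x_j) \leq r$ for all $0 \leq i \leq j \leq n$, with faces and degeneracies given by omission and repetition of entries. The first step is to observe that, precisely because $X$ is a symmetric $\ell^{\infty}$ metric space, the relation $x \sim_r y :\Leftrightarrow X(x,y) \leq r$ is an equivalence relation on the vertex set of $X$: reflexivity holds since $X(x,x) = 0 \leq r$, symmetry since $X(x,y) = X(y,x)$, and transitivity is exactly the ultrametric inequality, $X(x,z) \leq \max(X(x,y), X(y,z)) \leq r$ whenever $X(x,y), X(y,z) \leq r$. (Strictness of $X$ plays no role here.)

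The second step is to deduce the shape of $N_{\max}(X)(r)$. Since $\sim_r$ is an equivalence relation, a tuple $(x_0,\dots,x_n)$ is an $n$-simplex if and only if all of its entries lie in a single $\sim_r$-class $C$, and conversely every tuple of points of a single class $C$ is a simplex. Hence, writing $E(C)$ for the simplicial set with $E(C)_n = C^{n+1}$ and omission/repetition as structure maps (the nerve of the indiscrete category on the set $C$, equivalently $\mathrm{cosk}_0$ of $C$), the decomposition of $X$ into $\sim_r$-classes induces an isomorphism of simplicial sets $N_{\max}(X)(r) \cong \coprod_{C \in X/\sim_r} E(C)$, compatible with the face and degeneracy maps.

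The third step invokes the standard fact that $E(C)$ is contractible for any nonempty set $C$: the indiscrete category on a nonempty set is equivalent to the terminal category, so its nerve is homotopy equivalent to a point, or one may simply exhibit an extra degeneracy / contracting homotopy onto any chosen $c_0 \in C$. In particular $H_n$ of the normalized chain complex of $E(C)$ vanishes for $n \geq 1$. Finally, $\oN_{\max}(X)(r)$ is the levelwise free abelian group on $N_{\max}(X)(r)$, and free abelianization followed by the (normalized Moore complex) homology functor sends coproducts of simplicial sets to direct sums; therefore $H_n \oN_{\max}(X)(r) \cong \bigoplus_{C \in X/\sim_r} H_n E(C) = 0$ for all $n \geq 1$. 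Since $r$ was arbitrary, this proves the proposition.

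There is no serious obstacle here: the only place any genuine content enters is the first step, where the ultrametric inequality (together with symmetry) upgrades ``pairwise within $r$'' to ``contained in a single equivalence class'' — and without this the statement is false — together with the textbook contractibility of $E(C)$, which I would either cite or dispatch in one line. Everything else is bookkeeping about how homology interacts with disjoint unions.
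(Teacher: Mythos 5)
Your argument is correct and follows essentially the same route as the paper's proof: both establish that $x \sim_r y \Leftrightarrow X(x,y) \leq r$ is an equivalence relation (with transitivity furnished by the ultrametric inequality), so that $N_{\max}(X)(r)$ decomposes as a disjoint union of contractible pieces indexed by equivalence classes, whence all positive-degree homology vanishes. Your write-up is slightly more explicit than the paper's in naming the pieces as the $0$-coskeleton $E(C)$ and supplying the contracting homotopy, and in flagging that strictness of $X$ is not used, but these are refinements in presentation rather than a different argument.
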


\begin{proof}

Given any $r \in \Rop$, the relation of being at distance $\leq r$ in an ultrametric space $X$ is an equivalence relation on the points of $X$ (the stronger form of the triangle inequality is required for transitivity to hold). Thus $X$ may be partitioned into clusters of points, where the points in a single cluster are pairwise at distance $\leq r$ (and points from different clusters are at pairwise distance $> r$).

Then we have the following description of $N_{\max}(X)(r)$:
\begin{enumerate}

\item The vertex set $N_{\max}(X)(r)_0$ is the set of points of $X$;

\item For $a,b \in X$, there is exactly one $1$-simplex from $a$ to $b$ iff $a$ and $b$ are in the same cluster, and none otherwise.

\item For $n \geq 2$, for any $(n+1)$-tuple $(x_0, \dots, x_n)$ of points of $X$, there is exactly one $n$-simplex spanning those vertices (in that order) iff $x_0, \dots x_n$ all lie in the same cluster, and none otherwise.

\end{enumerate}
which makes it clear that the points of each cluster in $X$ comprise the vertex set of a contractible component of $N_{\max}(X)(r)$. Since the clusters partition $X$ we must have that $N_{\max}(X)(r)$ is the disjoint union of contractible components (with the number of components equal to the number of clusters), and thus the conclusion follows.

\end{proof}

The converse (trivial positive degree persistent homology implies ultrametric space) is false, as any metric space with $3$ points will necessarily have trivial positive degree persistent homology. It is unknown to the author whether some reasonable additional assumptions might guarantee the converse to Proposition \ref{prop:persult}.

\subsection{Local $\ell^p$ homology and approximate collinearity}

Let $X$ be an honest metric space. Given $p \in [1, \infty)$ and an ordered pair $(a,b)$ of distinct points $a,b \in X$, let us say that a point $c \in X$ \emph{$p$-interpolates} between $a$ and $b$ when $a \neq c \neq b$ and there exist $r, s \in \mathbb{R}$ such that $d(a,c) \leq r$, $d(c,b) \leq s$, and $\left ( r^p + s^p \right ) ^{\frac{1}{p}} = d(a,b)$. Some elementary real analysis shows that for $1 \leq p \leq q \leq \infty$, if a point $p$-interpolates between $a$ and $b$, then it also $q$-interpolates between $a$ and $b$.

When $p = 1$, we can take $d(a,c) = r$ and $d(c,b) = s$ in the above condition without loss of generality. That is, a point that $1$-interpolates between $a$ and $b$ is just a ``collinear'' point between them. From this perspective, we can see $p$-interpolation as a kind of ``approximate collinearity'' condition, where the approximation is better the closer $p$ is to $1$.

$X$ is said to be \emph{Menger convex} when for any pair of distinct points there exists a point that $1$-interpolates between them. For reasonably nice instances of honest metric spaces $X$, Menger convexity is equivalent to a more widely familiar notion of convexity (``geodesicity''), as detailed in e.g. \cite{convex}.

Now Theorem 7.4 in \cite{lsmaghom} exhibits magnitude homology (i.e. local $\ell^1$ homology) as an algebraic measure of (the failure of) convexity by showing that $\mch_1(X)(r) = H_1 \loc_J \oN_+(X)(r)$ is freely generated by the ordered pairs $(a,b)$ of distinct points $a, b \in X$ at distance $r$ for which there exists no $1$-interpolating point.

Recalling Proposition \ref{prop:presimp} above and straightforwardly extending the argument of the proof of Theorem 7.4 in \cite{lsmaghom} (which applies when $p = 1$) to more general choices of $p \in [1, \infty)$, we get the following (we exclude the case $p = \infty$ because the argument does not extend in that case):
\begin{proposition}
\label{prop:lsmaghom}

Let $(X,d)$ be an honest metric space and $p \in [1, \infty)$.

$H_1 \loc_J \oN_{+_p} (X)(r)$ is freely generated by ordered pairs $(a,b)$ of points in $X$ such that
\begin{enumerate}

\item $d(a,b) = r$, and;

\item There is no $p$-interpolating point between $a$ and $b$.

\end{enumerate}

\end{proposition}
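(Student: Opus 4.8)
The plan is to mimic the proof of Theorem 7.4 in \cite{lsmaghom}, re-deriving it inside our framework so that the role of the parameter $p$ becomes transparent. Concretely, by Definition \ref{def:lphom} and the discussion in Section \ref{subsec:loc}, $\loc_J \oN_{+_p}(X)(r)$ is the normalized chain complex of the simplicial abelian group whose $n$-simplices are freely generated by tuples $(x_0, \dots, x_n)$ with $\sum_p$-length \emph{exactly} $r$ — that is, by the quotient description $B_{+_p}(X)(r)_n = \oN_{+_p}(X)(r)_n / \bigcup_{s<r}\oN_{+_p}(X)(s)_n$. Here by Proposition \ref{prop:presimp}, a tuple $(x_0,\dots,x_n)$ lies in $N_{+_p}(X)(r)_n$ exactly when there exist $r_1,\dots,r_n$ with $r_1 +_p \cdots +_p r_n \leq r$ and $d(x_i,x_j) \leq r_{i+1} +_p \cdots +_p r_j$ for all $i \leq j$; for an $\ell^1$ metric space one can take $r_k = d(x_{k-1},x_k)$, but $X$ here is merely honest (hence $\ell^1$, hence also $\ell^p$ for all $p$), so Corollary \ref{cor:vcatnerve} applies and the minimal such choice is $r_k = d(x_{k-1},x_k)$, giving total $\sum_p$-length $d(x_0,x_1) +_p \cdots +_p d(x_{n-1},x_n)$.

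The key computation is then in low degrees. First I would record that $B_{+_p}(X)(r)_0 = 0$ for $r \neq 0$ (a single vertex has length $0$) and that $B_{+_p}(X)(r)_1$ is free on the ordered pairs $(a,b)$ with $d(a,b) = r$, while $B_{+_p}(X)(r)_2$ is free on triples $(a,b,c)$ with $d(a,b) +_p d(b,c) = r$ (using that the minimal realizing lengths are the edge distances). The differential $\partial\colon B_{+_p}(X)(r)_2 \to B_{+_p}(X)(r)_1$ sends $(a,b,c) \mapsto (b,c) - (a,c) + (a,b)$, but a boundary face survives only if its own $\sum_p$-length equals $r$: since $d(a,b),d(b,c) \leq d(a,b)+_p d(b,c) = r$ with equality iff the other summand is $0$ (by the strictness property of $X$, forcing $a=b$ or $b=c$, i.e. a degenerate simplex), for a \emph{nondegenerate} triple both $(a,b)$ and $(b,c)$ die in degree $1$ and only $-(a,c)$ can survive — and it survives precisely when $d(a,c) = r$, i.e. when $c$ is a $p$-interpolating point between $a$ and $b$. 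Thus $\partial$ hits exactly the generators $(a,b)$ with $d(a,b)=r$ for which a $p$-interpolating point exists. Combined with $B_{+_p}(X)(r)_0 = 0$ (so $\ker(\partial_1) = B_{+_p}(X)(r)_1$), this gives $H_1 = B_{+_p}(X)(r)_1 / \operatorname{im}(\partial_2)$ free on the pairs $(a,b)$ at distance $r$ with no $p$-interpolating point, as claimed.

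The main obstacle — and the one place where the hypothesis $p \neq \infty$ enters, exactly as flagged in the statement — is controlling the image of $\partial_2$: I must be sure that the \emph{only} way to produce a relation killing the class of $(a,b)$ is via a single interpolating triple, i.e. that no combination of triples whose surviving boundary faces cancel pairwise can conspire to kill $(a,b)$ without an interpolator existing. This is where the strict inequality $r +_p s > \max(r,s)$ for $r,s > 0$ when $p < \infty$ is essential: it forces each nondegenerate $2$-simplex in $B_{+_p}(X)(r)$ to contribute at most one surviving face, so the boundary map is ``sparse'' enough that its image is literally spanned by the interpolator-generators with no hidden cancellations; when $p = \infty$ both outer faces of a triple $d(a,b) = d(b,c) = r$ survive and the combinatorics of $\operatorname{im}(\partial_2)$ becomes genuinely different (indeed the conclusion fails, consistent with Proposition \ref{prop:persult}). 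I would therefore devote the bulk of the proof to this degree-$2$ bookkeeping, citing the parallel argument of \cite{lsmaghom} for the case $p=1$ and noting that the real-analytic input ``$r +_p s = \max(r,s)$ iff $rs = 0$, for $p < \infty$'' (already observed in Example \ref{ex:qs}(\ref{quantales})) is all that is needed to transport it verbatim to general finite $p$.
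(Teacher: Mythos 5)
There is a genuine gap, and it is the very step you flag as unproblematic. You reduce to Corollary~\ref{cor:vcatnerve} by asserting that an honest metric space is ``$\ell^1$, hence also $\ell^p$ for all $p$,'' but this implication runs the wrong way. Example~\ref{ex:qs}(\ref{quantales}) records that $r +_p s \geq r +_q s$ for $p < q$ and then states precisely the opposite of what you use: satisfaction of the $+_p$ triangle inequality \emph{implies} the usual ($+_1$) one, i.e.\ $\ell^p \Rightarrow \ell^1$, not conversely. A triple of collinear points with $d(a,b) = d(b,c) = 1$, $d(a,c) = 2$ is honest but not $\ell^2$, since $2 > 1 +_2 1 = \sqrt{2}$. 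So for $p > 1$ an honest metric space is generally \emph{not} an $(\mathbb{R}, +_p)$-category, and Corollary~\ref{cor:vcatnerve} is unavailable; this is exactly why the paper's proof cites Proposition~\ref{prop:presimp} instead.

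Concretely, Proposition~\ref{prop:presimp} gives that a nondegenerate triple $(a,b,c)$ first appears in $N_{+_p}(X)(r)_2$ at
\begin{equation*}
r \;=\; \max\bigl(\, d(a,c),\ d(a,b) +_p d(b,c) \,\bigr),
\end{equation*}
not at $d(a,b) +_p d(b,c)$. In the collinear example the $2$-simplex $(a,b,c)$ lives at level $2$, not $\sqrt{2}$; indeed the constraint $d(a,c) \leq r_1 +_p r_2 \leq r$ from Proposition~\ref{prop:presimp} makes level $\sqrt{2}$ impossible, and had $(a,b,c)$ lived there its face $(a,c)$ (of length $2 > \sqrt{2}$) would not even lie in $N_{+_2}(X)(\sqrt{2})_1$. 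Your degree-$2$ bookkeeping, which flows from the incorrect description, therefore needs an extra case. Fortunately the conclusion survives the repair: when $d(a,c) > d(a,b) +_p d(b,c)$ the minimal level is $r = d(a,c)$, the short faces have lengths $\leq d(a,b) +_p d(b,c) < r$ and die, the long face $(a,c)$ has length exactly $r$ and survives, and $d(a,b) +_p d(b,c) < d(a,c)$ is precisely the condition for $b$ to $p$-interpolate between $a$ and $c$. Combined with the complementary case $d(a,c) \leq d(a,b) +_p d(b,c) = r$ (your analysis, which is fine there), $\operatorname{im}(\partial_2)$ is still spanned by exactly the pairs admitting a $p$-interpolator. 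But as written the argument rests on a false premise. (Minor labeling slip, separately: for the $2$-simplex $(a,b,c)$ the surviving face is $(a,c)$ and the candidate interpolator is the \emph{middle} vertex $b$, not $c$ as you wrote.)
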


 Thus for any pair $(a,b)$ of points in $X$, the quantity
\begin{equation*}
p_{a,b} = \inf \{ p \in [1, \infty] \mid (a,b) \text{ is trivial in } H_1 \loc_J \oN_{+_p} (X)(d(a,b)) \}
\end{equation*}
indicates the existence of approximately collinear points between $a$ and $b$ (where nonexistence gives $p_{a,b} = \infty$); a lower value of $p_{a,b}$ indicates better approximation to collinearity.

\subsection{Magnitude homology of automata}

We draw from aspects of automata theory found e.g. in \cite{betti}. A nondeterministic automaton with inputs from a monoid $M$ is an instance of a model of computation, in which (roughly) we have a set of states and rules by which elements of $M$ take each state to other states. We recall the following setup from \cite{betti} which exhibits a nondeterministic automaton as an enriched category.

Given a monoid $M$, we may take the free quantale $\mathcal{P}(M)$ whose objects are the subsets of $M$ and whose monoidal product is given by
\begin{equation*}
A \otimes B = \{ a \cdot b \mid a \in A \subseteq M, b \in B \subseteq M\}
\end{equation*}
where $\cdot$ is the monoid multiplication. The unit of $\mathcal{P}(M)$ is $\{e\}$ where $e$ is the identity element of $M$. A typical example for $M$ is the free word monoid on some fixed set of letters, with multiplication given by concatenation.

Then a nondeterministic automaton with inputs from $M$ is a category $X$ enriched over $\mathcal{P}(M)$. We are to view the objects of $X$ as the states, and the hom-object $X(a,b) \in \mathcal{P}(M)$ as the set of elements of $M$ which take the state $a$ to the state $b$. (The nondeterminism stems from the fact that we do not require that $X(a,b)$ be disjoint from $X(a,c)$ for $b \neq c$, so that an element of $M$ may take $a$ to more than one state.)

We note that $\mathcal{P}(M)$ is not an affine quantale, since $\{e\}$ is not terminal in $\mathcal{P}(M)$. Thus the material of our paper so far does not immediately apply in this framework. However, we may try the following. Regarding $M$ as a monoidal category, we may ask for a lax monoidal functor $\mbc: M \rightarrow (\mathbb{R},+)$ which acts as a ``cost function''. That is, we associate a nonnegative real number $\mbc(m)$ to each element $m \in M$ which represents some kind of cost for acting by $m$, and the lax monoidality of $\mbc$, i.e.
\begin{enumerate}

\item $\mbc(m_2) + \mbc(m_1) \geq \mbc(m_2 \cdot m_1)$;

\item $0 \geq \mbc(\{e\})$ (which implies $\mbc(\{e\}) = 0$)

\end{enumerate}
is just the condition that
\begin{enumerate}

\item the cost of acting by $m_1$ and then by $m_2$ should be at most the sum of the individual costs of $m_1$ and $m_2$, and;

\item the cost of not acting at all is $0$.

\end{enumerate}

Given such a cost function $\mbc: M \rightarrow \mathbb{R}$, we have an induced cost function (lax monoidal functor) $\mbC: \mathcal{P}(M) \rightarrow \mathbb{R}$ given by
\begin{equation*}
\mbC(A) = \inf\limits_{a \in A} \mbc(a)
\end{equation*}
which is strong monoidal if $c$ is.

Thus given any $\mathcal{P}(M)$-category $X$, we get an induced $(\mathbb{R},+)$-category $\mbC(X)$ with the same objects and hom-objects $\mbC(X)(a,b) = \mbC(X(a,b))$. In the context of considering an automaton as a $\mathcal{P}(M)$-category $X$, $\mbC(X)$ is the (generalized) metric space whose points are the states of the automaton and each hom-object $\mbC(X)(a,b)$ is the optimal cost of going from state $a$ to state $b$. Given an ordered pair $(a,b)$ of states, let us say that it is \emph{cost-primitive} if there exists no state $c$ such that $\mbC(X)(a,b) = \mbC(X)(a,c) + \mbC(X) (c,b)$. It is evident that $(a,b)$ is cost-primitive iff either there is no ``intermediate state'' $c$ for which there exist $m_1 \in X(a,c)$ and $m_2 \in X(c,b)$, or for every such intermediate state $c$ and for every way of getting from $a$ to $b$ through $c$, there is a way to get from $a$ to $b$ which costs strictly less.

Let us denote the functor $H_{\bullet} \loc_J \oN_+: \rgph \rightarrow \Ab^{\Rop}$ by $\mch_{\bullet}: \rgph \rightarrow \Ab^{\Rop}$, so that $\mch_{\bullet}$ just takes the magnitude homology of each $\mathbb{R}$-graph. If we have that the induced $(\mathbb{R},+)$-category $\mbC(X)$ is strict (i.e. the automaton had no pairs of distinct states connected by a $0$-cost transition), then following the reasoning of (the proof of) Theorem 7.4 of \cite{lsmaghom}, we have that for each $r > 0$, $\mch_1 \left ( \mbC(X) \right ) (r)$ is the free abelian group on the set of ordered pairs $(a,b)$ of states which are cost-primitive and for which $\mbC(X)(a,b) = r$.

As for why this application of magnitude homology might be interesting, let us assume the setup above and that $(a,b)$ is a cost-primitive pair of states. Then either there is no intermediate state between them, or for any way of getting from $a$ to $b$ through an intermediate state there is a cheaper way to get from $a$ to $b$. In the former case there is no more to say, so let us assume the latter. Let us assume also that the cost function $\mbc$ (and thus also the induced cost function $\mbC$) is strong monoidal - this is the case, for example, if $M$ is the free word monoid on some fixed set of letters and $\mbc(m)$ depends linearly on the length of $m \in M$. Then for each state $c$ there must be some $m \in X(a,b)$ such that $\mbc(m) < \mbc(m_2 \cdot m_1) = \mbc(m_1) + \mbc(m_2)$ for all $m_1 \in X(a,c)$ and $m_2 \in X(c,b)$. Assuming that the number of possible states is finite (as in any reasonable model of computation) and that $\mbC(X)(a,b) < \infty$, this implies that there is at least one way to transition from state $a$ to $b$ which is cheaper than all possible composite state transitions taking $a$ to $b$.

Conversely, if $r>0$ and $(a,b)$ is a pair of states for which $\mbC(X)(a,b) = r$, and there is at least one state transition $m$ from $a$ to $b$ which is strictly cheaper than all possible composite state transitions which take $a$ to $b$, then we must have that $\mbc(m) < \mbc(m_2 \cdot m_1) = \mbc(m_1) + \mbc(m_2)$ for all $c \in X$, $m_1 \in X(a,c)$, and $m_2 \in X(c,b)$. If $\mbc$ ``has discrete range'', i.e. if there is no $t \in \mathbb{R}$ for which it is simultaneously true that $t$ is the infimum of a set of possible values of $\mbc$ and that $t$ lies outside the image of $\mbc$, then the previous statement implies that $\mbC(X)(a,b) \leq \mbc(m) < \mbC(X)(a,c) + \mbC(X)(c,b)$ for all possible $c$. Thus $(a,b)$ is cost-primitive and a generator of $\mch_1 \left ( \mbC(X) \right ) (r)$.

We summarize this analysis as follows.

\begin{proposition}
\label{prop:magaut}

Let $M$ be a monoid and $\mathcal{P}(M)$ the corresponding free quantale.

Let $\mbc: M \rightarrow (\mathbb{R},+)$ be a strong monoidal functor, and $\mbC: \mathcal{P}(M) \rightarrow (\mathbb{R},+)$ the corresponding induced strong monoidal functor.

Let $X$ be an automaton with inputs from $M$, i.e. a $\mathcal{P}(M)$-category $X$ with finitely many objects. We have the corresponding $(\mathbb{R},+)$-category $\mbC(X)$.

If $\mbC(X)$ is strict (i.e. there is no pair of distinct states in $X$ connected by a $0$-cost transition), then for $r > 0$, if $(a,b)$ is a generator of $\mch_1 \left ( \mbC(X) \right ) (r)$ then there is at least one state transition $m$ taking $a$ to $b$ which does not occur as a composite of state transitions, and the cost of $m$ is strictly less than the cost of any such composite transition.

If additionally $\mbc$ has discrete range (in the sense described previously), then the converse is true: for $(a,b)$ with $\mbC(X)(a,b)=r$, if there is at least one $m \in X(a,b)$ such that $\mbc(m)$ is less than the cost of all possible composite transitions from $a$ to $b$, then $(a,b)$ is cost-primitive and thus a generator of $\mch_1 \left ( \mbC(X) \right ) (r)$.

\end{proposition}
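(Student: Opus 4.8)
The plan is to reduce everything to the description established in the discussion preceding the statement (obtained by adapting the proof of Theorem 7.4 of \cite{lsmaghom} to the possibly-asymmetric $\ell^1$ setting), namely that for $r > 0$ the group $\mch_1(\mbC(X))(r)$ is the free abelian group on the ordered pairs $(a,b)$ of states with $\mbC(X)(a,b) = r$ that are cost-primitive. By the triangle inequality enjoyed by the $(\mathbb{R},+)$-category $\mbC(X)$ together with the definitional clause ruling out equality, cost-primitivity of $(a,b)$ says exactly that $\mbC(X)(a,c) + \mbC(X)(c,b) > \mbC(X)(a,b)$ for every state $c$ genuinely intermediate between $a$ and $b$. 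Granting this, the remaining work is a translation between ``cost-primitive'' — a statement about the infimum-valued hom-objects $\mbC(X)(a,b) = \inf_{m \in X(a,b)} \mbc(m)$ — and the existence of a single cheap indecomposable transition $m \in X(a,b)$; the translation will rest on the fact that $\mbc$ is \emph{strong} monoidal, so that $\mbc(m_2 \cdot m_1) = \mbc(m_1) + \mbc(m_2)$ exactly.

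For the first implication I would argue as follows. If $(a,b)$ generates $\mch_1(\mbC(X))(r)$ then it is cost-primitive with $\mbC(X)(a,b) = r$, so for each intermediate state $c$ we have $r = \mbC(X)(a,b) < \mbC(X)(a,c) + \mbC(X)(c,b)$; since $r$ is the infimum of $\mbc$ over $X(a,b)$, some $m_c \in X(a,b)$ has $\mbc(m_c)$ below $\mbC(X)(a,c) + \mbC(X)(c,b)$, hence below $\mbc(m_1) + \mbc(m_2)$ for all $m_1 \in X(a,c)$, $m_2 \in X(c,b)$. Finiteness of the state set lets me take $m$ to be one of these finitely many $m_c$ of least cost; then $\mbc(m) < \mbc(m_1) + \mbc(m_2)$ for \emph{every} intermediate state and all such $m_1, m_2$, and strong monoidality turns this into the assertion that $m$ does not occur as a composite routed through an intermediate state and costs strictly less than any such composite, with an iteration over factorizations handling composites of length $> 2$.

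For the converse, assuming additionally that $\mbc$ has discrete range, I would take $(a,b)$ with $\mbC(X)(a,b) = r$ and some $m \in X(a,b)$ cheaper than every composite transition from $a$ to $b$, fix an intermediate state $c$, and note that $\{\mbc(m_2 \cdot m_1) : m_1 \in X(a,c),\, m_2 \in X(c,b)\}$ is a set of values of $\mbc$; the discrete-range hypothesis forces its infimum, which is $\mbC(X)(a,c) + \mbC(X)(c,b)$, to be attained by an actual composite, so $\mbC(X)(a,b) \le \mbc(m) < \mbC(X)(a,c) + \mbC(X)(c,b)$. As $c$ was arbitrary, $(a,b)$ is cost-primitive, and then it is a generator of $\mch_1(\mbC(X))(r)$ by the cited description.

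The genuinely delicate point — signalled by the bracketed hypotheses — is the passage from the infimum defining $\mbC(X)(a,b)$ to a bona fide transition realizing or beating it: this is why one needs finitely many states (and $\mbC(X)(a,b) < \infty$) in the first half and the discrete-range condition in the second. Tangled up with it is the need to pin down, once and for all, what counts as a ``composite state transition'' — excluding factorizations through $a$ or $b$ themselves and identity factors $\{e\}$, so that cost-primitivity is not vacuous — after which the strong-monoidality identity does the real work and the rest is routine bookkeeping with the triangle inequality for $\mbC(X)$.
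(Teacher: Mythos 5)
Your proof is correct and follows essentially the same route as the paper: reduce via the adaptation of Theorem 7.4 of \cite{lsmaghom} to the characterization of $\mch_1$ in terms of cost-primitive pairs, pick a near-infimum transition $m_c$ for each intermediate state $c$ and use finiteness of the state set to select a single cheapest one, and in the converse use the discrete-range hypothesis to convert the strict inequality against each composite into a strict inequality against $\mbC(X)(a,c) + \mbC(X)(c,b)$. One small point worth flagging, which the paper also leaves implicit: you read ``discrete range'' as forcing the infimum of $\{\mbc(m_2 \cdot m_1)\}$ to be \emph{attained} by a composite, whereas the paper's literal definition only says the infimum lies in the image of $\mbc$; the attainment reading is what the argument actually needs, so your version makes explicit a slight strengthening that the paper tacitly assumes.
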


We note that even if $\mbC(X)$ is not strict, we can collapse all the points at distance $0$ (in some careful way involving symmetrization of distances or application of $\Fr: \rgph \rightarrow \rpcat$) and then apply the result. In that case Proposition \ref{prop:magaut} above is a result about equivalence classes of pairs of states, rather than actual pairs of states.

As a final remark, we note that one could just as well take the persistent homology $H_{\bullet} \oN_{\max}$ of $C(X)$ instead of magnitude homology, but it is unclear if the resulting homology groups have a useful interpretation in this context of automata.

\bibliography{References}

\begin{thebibliography}{\v{C}66}

\bibitem[AC19]{omar}
Omar Antol\'{\i}n~Camarena.
\newblock {\em Turning simplicial complexes into simplicial sets}.
\newblock Notes, 2019.
\newblock
  \url{https://www.matem.unam.mx/~omar/notes/ssets-from-complexes.html}.

\bibitem[BK82]{betti}
Renato Betti and Stefano Kasangian.
\newblock A quasi-universal realization of automata.
\newblock {\em Rend. Istit. Mat. Univ. Trieste}, 14(1-2):41--48, 1982.

\bibitem[dSG07]{silvaghrist}
Vin de~Silva and Robert Ghrist.
\newblock Coverage in sensor networks via persistent homology.
\newblock {\em Algebr. Geom. Topol.}, 7:339--358, 2007.

\bibitem[Ghr08]{ghrist}
Robert Ghrist.
\newblock Barcodes: the persistent topology of data.
\newblock {\em Bull. Amer. Math. Soc. (N.S.)}, 45(1):61--75, 2008.

\bibitem[GJ99]{gj}
Paul~G. Goerss and John~F. Jardine.
\newblock {\em Simplicial homotopy theory}, volume 174 of {\em Progress in
  Mathematics}.
\newblock Birkh\"{a}user Verlag, Basel, 1999.

\bibitem[HW17]{hepwil}
Richard Hepworth and Simon Willerton.
\newblock Categorifying the magnitude of a graph.
\newblock {\em Homology Homotopy Appl.}, 19(2):31--60, 2017.

\bibitem[{Jar}19]{jardine}
J.~F. {Jardine}.
\newblock {Fuzzy sets and presheaves}.
\newblock {\em arXiv e-prints}, page arXiv:1904.10314, Apr 2019.

\bibitem[Kel05]{kelly}
G.~M. Kelly.
\newblock Basic concepts of enriched category theory.
\newblock {\em Repr. Theory Appl. Categ.}, (10):vi+137, 2005.
\newblock Reprint of the 1982 original [Cambridge Univ. Press, Cambridge;
  MR0651714].

\bibitem[Law02]{lawveremetric}
F.~William Lawvere.
\newblock Metric spaces, generalized logic, and closed categories [{R}end.
  {S}em. {M}at. {F}is. {M}ilano {\bf 43} (1973), 135--166 (1974); {MR}0352214
  (50 \#4701)].
\newblock {\em Repr. Theory Appl. Categ.}, (1):1--37, 2002.
\newblock With an author commentary: Enriched categories in the logic of
  geometry and analysis.

\bibitem[LM17]{magnothom}
Tom Leinster and Mark~W. Meckes.
\newblock The magnitude of a metric space: from category theory to geometric
  measure theory.
\newblock In {\em Measure theory in non-smooth spaces}, Partial Differ. Equ.
  Meas. Theory, pages 156--193. De Gruyter Open, Warsaw, 2017.

\bibitem[LS17]{lsmaghom}
Tom {Leinster} and Michael {Shulman}.
\newblock {Magnitude homology of enriched categories and metric spaces}.
\newblock {\em arXiv e-prints}, page arXiv:1711.00802, Nov 2017.

\bibitem[{Ott}18]{otter}
Nina {Otter}.
\newblock {Magnitude meets persistence. Homology theories for filtered
  simplicial sets}.
\newblock {\em arXiv e-prints}, page arXiv:1807.01540, Jul 2018.

\bibitem[Pap14]{convex}
Athanase Papadopoulos.
\newblock {\em Metric spaces, convexity and non-positive curvature}, volume~6
  of {\em IRMA Lectures in Mathematics and Theoretical Physics}.
\newblock European Mathematical Society (EMS), Z\"{u}rich, second edition,
  2014.

\bibitem[Ros90]{rosenthal}
Kimmo~I. Rosenthal.
\newblock {\em Quantales and their applications}, volume 234 of {\em Pitman
  Research Notes in Mathematics Series}.
\newblock Longman Scientific \& Technical, Harlow; copublished in the United
  States with John Wiley \& Sons, Inc., New York, 1990.

\bibitem[Spi09]{spivakfuzzy}
David Spivak.
\newblock {\em Metric realization of fuzzy simplicial sets}.
\newblock Notes, 2009.
\newblock \url{http://math.mit.edu/~dspivak/files/metric_realization.pdf}.

\bibitem[\v{C}66]{cech}
Eduard \v{C}ech.
\newblock {\em Topological spaces}.
\newblock Revised edition by Zden\v{e}k Frol\'{\i}k and Miroslav Kat\v{e}tov.
  Scientific editor, Vlastimil Pt\'{a}k. Editor of the English translation,
  Charles O. Junge. Publishing House of the Czechoslovak Academy of Sciences,
  Prague; Interscience Publishers John Wiley \& Sons, London-New York-Sydney,
  1966.

\bibitem[ZC05]{zcpershom}
Afra Zomorodian and Gunnar Carlsson.
\newblock Computing persistent homology.
\newblock {\em Discrete Comput. Geom.}, 33(2):249--274, 2005.

\end{thebibliography}
\bibliographystyle{alpha}

\end{document}